\documentclass[12pt,reqno]{amsart}

\usepackage[text={152mm,230mm},centering]{geometry}
\geometry{a4paper}                   
\usepackage{enumerate}

\usepackage[mathscr]{eucal}
\usepackage[all]{xy}
\usepackage{mathrsfs}
\usepackage{xypic}
\usepackage{amsfonts}
\usepackage{amsmath}
\usepackage{amsthm}
\usepackage{amssymb}
\usepackage{latexsym}
\usepackage{tabularx}
\usepackage{graphicx}
\usepackage{wrapfig}
\usepackage{pict2e}
\usepackage{hyperref}

\usepackage{pdfpages}


\allowdisplaybreaks
\linespread{1.2}

\newtheorem{theorem}{Theorem}[section]

\newtheorem{proposition}[theorem]{Proposition}
\newtheorem{lemma}[theorem]{Lemma}

\newtheorem{corollary}[theorem]{Corollary}

\theoremstyle{definition}

\newtheorem{definition}[theorem]{Definition}

\newtheorem{example}[theorem]{Example}

\newtheorem*{ack}{Acknowledgements}

\usepackage{type1cm}
\usepackage[marginal]{footmisc}

\begin{document}

\title{DG Singular equivalence and singular locus}

\author{Leilei Liu}
\author{Jieheng Zeng}

\address{School of Science, Zhejiang University of Science and Technology, Hangzhou, 
Zhejiang Province, 310023, P.R. China}
\email{liuleilei@zust.edu.cn}

\address{School of Mathematical Sciences, Peking University, Beijing 100871 P.R. China}
\email{zengjh662@163.com}

\begin{abstract}
For a commutative Gorenstein Noetherian ring $R$, 
we construct an affine scheme $X$ solely from the DG singularity category $S_{dg}(R)$ of
$R$ such that
there is a finite surjective morphism $X \rightarrow \mathrm{Spec}(R /I)$, 
where $\mathrm{Spec}(R /I)$ is the singular locus in $\mathrm{Spec}(R)$. 
As an application, for such two rings
with equivalent DG singularity categories, we prove that singular loci in their affine schemes have the same dimension.
\end{abstract}
  
\maketitle

\section{Introduction}

\subsection{Backgrounds}
In 1986, Buchweitz introduced a quotient category of an associative algebra $A$, which is called singularity category of $A$ now \cite{B1}. 
The singularity category $D_{sg}(A)$ is defined as the Verdier quotient 
$D^{b}(A) / \mathrm{Perf}(A)$, where $\mathrm{Perf}(A)$ 
is the full subcategory consisting of perfect complexes in the derived category $D^b(A)$ of $A$. 
Hence, $D_{sg}(A)$ measures the smoothness of $A$ in the sense that $A$ is 
homologically smooth if and only if $D_{sg}(A)$ is trivial. 
In that note, Buchweitz showed that $D_{sg}(A)$ is equivalent to the stable category 
$\underline{\mathrm{CM}}(A)$ of Cohen-Macaulay $A$-modules  
as triangulated categories when $A$ is Gorenstein.  
Later Orlov rediscovered the singularity category
via the perspectives of algebraic geometry and mathematical physics. 
Also he recover its deep relationship with Homological Mirror Symmetry \cite{DO}. 
Therefore singularity categories attract the interest of many mathematicians and 
there are many significant progress about singularity categories in various fields, 
such as tilting theory (\cite{IT0, MU0, BIY0} etc.), homological algebra
(\cite{B1, CS0, Ke0, W0, W1}), algebraic geometry
(\cite{MW0, MY0, B0} etc.)
and even in knot theory (\cite{KR}).

Singularity categories are triangulated categories. We know invariants under triangulated equivalence 
play an important role in the research of triangulated categories. 
For example, the Hochschild (co)homology of an associative algebra
are both invariant under the derived Morita equivalence. 
Moreover, Armenta and Keller \cite{AK} showed that the differential calculus
(consisting of Hochschild homology and cohomology endowed with several actions) of an
associative algebra is invariant under the derived Morita equivalence.  
For singularity categories, Wang showed that the Gerstenhaber bracket 
structure on the Tate-Hochschild cohomology is invariant under 
the singular equivalence of Morita type with level (see \cite{W1}). 

In noncommutative algebraic geometry, invariants from geometry are also important and interesting.  
Bondal and Orlov \cite{BO0} showed that for a projective variety $X$ with ample or anti-ample 
canonical bundle, its bounded derived category of coherent sheaves $D^{b}(X)$ 
recovers $X$. 
Recently, Hua and Keller \cite{HK0} showed the coordinate ring of a hypersurface with issolated singularity
can be constructred via it singularity category. The authors prove the result by using an isomorphism between the zeroth 
Tate-Hochschild cohomology and the Tyurina algebra of the hypersurface.

\subsection{Main result}
In \cite{DO1}, Orlov proved that the completion of a variety along 
its singular locus determines its singularity category, 
up to the idempotent completion of a triangulated category.  
Hence there is a natural question: is it true that the singularity category
 of a commutative ring determines its singular locus?
In this paper, We give an confirm answer of this question up to some extent. 

For a commutative Gorenstein Noetherian ring $R$, 
$D_{sg}(R)$ is not enough to detect the geometry generally. 
The result in \cite{D0} motivates us to consider some DG enhancement of $D_{sg}(R)$. 
The singularity category $D_{sg}(R)$ admits a canonical 
DG enhancement given by the DG quotient
$S_{dg}(R):= \mathcal{D}^{b}(R) / \mathcal{P}\mathrm{erf}(R)$ (\cite[\S3]{D0}), 
where $\mathcal{D}^{b}(R)$ is the {\it canonical} DG enhancement 
of $D^{b}(R)$, which then induces a DG enhancement $\mathcal{P}\mathrm{erf}(R)$ of $\mathrm{Perf}(R)$.
Our main result is the following. 

\begin{theorem}\label{Th1}
Let $R$ be a commutative Gorenstein Noetherian ring. Let 
$S_{dg}(R)$ be the DG category described as above.
Then there is an affine scheme $X$ constructed solely from 
$S_{dg}(R)$, and  
a finite surjective morphism $X \rightarrow \mathrm{Spec}(R/I)$, 
where $\mathrm{Spec}(R /I)$ is the singular locus in $\mathrm{Spec}(R)$. 
\end{theorem}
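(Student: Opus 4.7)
The plan is to take $X := \mathrm{Spec}\bigl(HH^{0}(S_{dg}(R))\bigr)$, where $HH^{0}$ is the zeroth Hochschild cohomology of $S_{dg}(R)$ viewed as a DG category. Being the derived center, $HH^{0}$ is a commutative ring and is invariant under quasi-equivalence, so $X$ depends only on $S_{dg}(R)$ itself.

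The morphism is constructed as follows. Multiplication by elements of $R$ on every object of $\mathcal{D}^{b}(R)$ yields a ring map $R \to HH^{0}(\mathcal{D}^{b}(R))$, and the DG quotient functor $\mathcal{D}^{b}(R) \to S_{dg}(R)$ pushes it forward to a homomorphism $\varphi\colon R \to Z := HH^{0}(S_{dg}(R))$. To see this factors through $\mathrm{Spec}(R/I)$, I would show $\varphi(I) \subseteq \mathrm{nil}(Z)$: for any $s \in I$, the ring $R_s$ is regular (since the singular locus is cut out by $I$), so $\mathcal{D}^{b}(R_s) = \mathcal{P}\mathrm{erf}(R_s)$ and hence $S_{dg}(R_s) = 0$. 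Since localization of DG categories commutes with $HH^{0}$, one gets $Z[\varphi(s)^{-1}] = 0$, so $\varphi(s)$ is nilpotent in $Z$. This produces the scheme morphism $X \to \mathrm{Spec}(R/I)$.

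For finiteness of this morphism, the strategy is to pass to the complete local situation. At each singular prime $\mathfrak{p}$, Buchweitz's theorem identifies a DG enhancement of $S_{dg}(R_{\mathfrak{p}})$ with that of the stable category $\underline{\mathrm{CM}}(R_{\mathfrak{p}})$ of maximal Cohen--Macaulay modules over the Gorenstein local ring $R_{\mathfrak{p}}$, where Tate--Hochschild methods (in the spirit of \cite{HK0}) yield finite generation of the local center over $(R/I)_{\mathfrak{p}}$. A Noetherian gluing along $\mathrm{Spec}(R/I)$ then promotes this to global finiteness of $Z$ as an $R/I$-module. Surjectivity of $X \to \mathrm{Spec}(R/I)$ follows by detecting each singular prime $\mathfrak{p}$ through a nonzero MCM module supported on $\overline{\{\mathfrak{p}\}}$ (for instance a syzygy of $R/\mathfrak{p}$), combined with going-up for the finite extension $R/I \to Z/\mathrm{nil}(Z)$.

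The genuine difficulty is the finiteness claim: there is no a priori grip on $HH^{0}$ of an unbounded DG category such as $S_{dg}(R)$, and the Noetherianity of $R$ does not transfer formally. My expectation is that, in analogy with the Hua--Keller analysis of the isolated hypersurface case in \cite{HK0}, the finiteness must be extracted from a local Tate--Hochschild computation at each singular prime and then patched together. Ensuring that this local-to-global passage is faithful and compatible with $\varphi$ is where the technical heart of the proof will lie.
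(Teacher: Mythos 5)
Your high–level plan coincides with the paper's: both take $X$ to be $\mathrm{Spec}$ of (the center of) the zeroth Hochschild cohomology of $S_{dg}(R)$, which by Keller's theorem (Theorem~\ref{Th2}) is the zeroth Tate--Hochschild cohomology $\mathrm{HH}^{0}_{sg}(R)$, and build the morphism from the natural map $R\to\mathrm{HH}^{0}_{sg}(R)$. The technical execution, however, differs and is incomplete exactly where the paper does the real work.

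For the factorization through $\mathrm{Spec}(R/I)$, you only argue that $\varphi(I)$ lands in the nilradical (via the localization $S_{dg}(R_s)=0$ for $s\in I$), which gives a set-theoretic, not yet scheme-theoretic, map to $\mathrm{Spec}(R/I)$; you would still need to factor through some $R/I^n$ and compose. The paper instead proves the sharper Proposition~\ref{Injec}: the map $R\to\mathrm{HH}^{0}_{sg}(R)$ has kernel exactly $I$, giving an \emph{injection} $R/I\hookrightarrow\mathrm{HH}^{0}_{sg}(R)$ via an explicit identification of the diagonal support scheme with $V(I)$. Once you have that injection together with module-finiteness, surjectivity of $X\to\mathrm{Spec}(R/I)$ is immediate (a finite injection of Noetherian rings induces a finite surjection of spectra); your proposed route through MCM modules supported on closures of primes plus going-up becomes unnecessary.

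The genuine gap is the finiteness claim, which you yourself flag as unproved. Your plan is to pass to complete local rings at each singular prime, invoke Buchweitz's $\underline{\mathrm{CM}}$-realization and a Hua--Keller--type Tate--Hochschild analysis locally, and then glue. Nothing in the proposal actually produces the local finite generation, nor the descent of finiteness from the local pieces to a single finite $R/I$-module; Noetherianity of $R$ does not give this for free, and the Hua--Keller argument you cite is specific to isolated hypersurface singularities. The paper avoids the local-to-global passage entirely: since $R$ is Gorenstein and essentially finite type over a perfect field, $R^e$ is Gorenstein Noetherian and admits a bounded complex $J^{*}$ of finitely generated injectives resolving $R^{e}$. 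Replacing $\mathbb{R}\mathrm{Hom}_R(R,J^*)$ by a bounded complex $P^{*}_{J^{*}}$ of finitely generated projective $R^e$-modules, one gets $E^{*}\cong R\otimes_{R^e}P^{*}_{J^{*}}$, a bounded complex of finitely generated $R$-modules, so $\mathrm{H}^{i}(E^{*})$ is finitely generated over $R$; the long exact sequence of $F^{*}\to\mathrm{Cone}(E^{*}\to F^{*})\to E^{*}[1]$ then sandwiches $\mathrm{HH}^{0}_{sg}(R)$ between two finitely generated $R$-modules (Lemma~\ref{Fini}). You should replace the speculative local-to-global step with an argument of this kind; as written, the proposal does not establish the theorem.
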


In the above theorem, the singular locus is given as follows.
Let $X$ be a scheme over $k$, then the {\it singular locus} of 
$X$ is  
$$
\mathrm{Sing}(X) := \big\{\mathbf{p} \in X | X_\mathbf{p} \,\textup{is singular}\big\},   
$$
where $X_\mathbf{p}$ is the localization of $X$ at  point $\mathbf{p}$. Note that the scheme structure of $\mathrm{Sing}(X)$ is induced from $X$. When $X$ is an affine scheme, i.e. $X \cong \mathrm{Spec}(R)$
for some commutative Noetherian ring $R$, 
then 
$$
\mathrm{Sing}(X) \cong \mathrm{Spec}(R/I)
$$
as scheme, where $I = \bigcap\limits_{\substack{\mathbf{p} \subseteq R \\R_{\mathbf{p}} \, is \, not \, regular }} \mathbf{p} $. 
Note that $I \cong \sqrt{I}$ as ideal of $R$ and then $\mathrm{Sing}(X)$ is reduced in $\mathrm{Spec}(R)$.  

\subsection{Idea of the proof}\label{IOP}
To prove the theorem, there are $3$ steps as follows
\begin{enumerate}
\item[(1)] First, for any commutative Gorenstein Noetherian ring,    
we show that the coordinate ring of its singular locus is a subring of its zeroth 
Tate-Hochschild cohomology. 

\item[(2)] Second,
 we continue to show that the reduced zeroth Tate-Hochschild cohomology is a finitely 
 generated module over the coordinate ring of its singular locus.

\item[(3)] At last, 
we construct a scheme $X$ with a morphism $X \rightarrow \mathrm{Spec}(R/I)$, 
and prove the Theorem \ref{Th1}.   
\end{enumerate}

The paper is organized as follows. In Section \ref{SS}, we introduce the notions and 
some properties of DG categories, DG singularity categories and DG singular equivalences. 
In Section \ref{TD}, we prove the statements in the steps (1) and (2) above. 
In Section \ref{TR}, we give the proof of the statement in step (3), and get the main result.   

\subsection{Notation and conventions}

Throughout this paper, $k$ is a field with characteristic zero.
In addition, commutative Noetherian rings are assumed to be
finitely generated over $k$ with finite Krull dimensions. 
Unless specified, an associative algebra $A$ is a unital over $k$.
We denote by $\mathrm{mod}(A)$ the category of 
finitely generated right $A$-modules and by $\mathrm{proj}(A)$ the full subcategory 
of $\mathrm{mod}(A)$ consisting of finitely generated projective $A$-modules. Recall 
that $D^b(A)$ is the bounded derived category of $\mathrm{mod}(A)$, 
and $\mathrm{Perf}(A)$ is the bounded homotopy category of complexes of $\mathrm{proj}(A)$.

\section{preliminaries}\label{SS}
\subsection{Singularity category and singular equivalence}\label{ScSe}

In this section, we recall the definitions of singularity category and singular equivalence for algebras. 

\begin{definition}
Let $A$ be an associative algebra. The {\it singularity category} of $A$ 
is defined to be the Verdier quotient $D_{sg}(A) := D^{b}(A)/\mathrm{Perf}(A)$.   
More precisely,
\begin{enumerate}
	\item[(i)] $\mathrm{Ob}\big(D_{sg}(A) \big) = \mathrm{Ob}\big(D^b(A) \big)$; 
	\item[(ii)] for $M^{\ast}, N^{\ast} \in \mathrm{Ob}\big(D_{sg}(A) \big)$, the set
     $\mathrm{Hom}_{D_{sg}(A)}(M^*,N^*)$ is the localization of 
     $\mathrm{Hom}_{D^{b}(A)}(M^*,N^*)$ over $\mathcal{S}$, where $\mathcal{S}$ 
     consists of $f \in \mathrm{Hom}_{D^{b}(A)}(M^{\ast}, N^{\ast})$ 
	satisfying $\mathrm{Cone}(f) \in \mathrm{Ob}\big( \mathrm{Perf}(A) \big)$.
 \end{enumerate}
\end{definition}
Note that $D_{sg}(A)$ is a triangulated category \cite{B1}.
\begin{definition}
Let $A$ and $B$ be associative algebras, then $A$ is said to be singular equivalent to $B$ 
if there exists a triangulated equivalence
$$
D_{sg}(A) \xrightarrow{\sim} D_{sg}(B).  
$$	  
\end{definition}

For example, Chen and Sun introduced the singular 
equivalence of Morita type in \cite{CS0}. More generally,  
Wang introduced singular equivalence of Morita type with level in \cite{W2}. 

\begin{example}[Knorrer's periodicity theorem]\label{EX1}
Let $S : = k[x_1, \cdots, x_m]$ for some integer $m$  and nontrivial element $f \in (x_1, \cdots, x_m)^2$.  
Considering these two algebras $S[u]/(f)$ and $S[u, v]/(f +uv)$, there exist 
an injective morphism
$$ 
S/(f) \hookrightarrow S[u]/(f)
$$
and a surjective morphism
$$
S[u, v]/(f +uv) \twoheadrightarrow S[u]/(f)
$$ 
mapping $v$ to zero. 
Notice that $S[u]/(f)$ is an $\big(S[u, v]/(f +uv)\big) \otimes (S/(f))^{op}$-module.

$S[u]/(f)$, viewed as an $S[u, v]/(f +uv)$-module, 
admits a projective resolution 
$$
0 \rightarrow S[u, v]/(f +uv) \xrightarrow{v \cdot} S[u, v]/(f +uv) \rightarrow  S[u]/(f) \rightarrow 0.
$$
It follows that $S[u]/(f) \in \mathrm{Perf}\big( S[u, v]/(f +uv) \big)$.  
Thus, there is a triangle functor 
$$
D_{sg}\big(S/(f)\big) \xrightarrow{(-)\otimes^{\mathbb{L}}_{S/(f)} 
S[u]/(f)} D_{sg}\big( S[u, v]/(f +uv) \big). 
$$
In \cite{K1}, Knorrer 
proved that the above triangle functor is a singular 
equivalence between $S/(f)$ and $S[u, v]/(f +uv)$. 
\end{example}


\subsection{DG category and DG functor}
\begin{definition}
A {\it DG category} is a $k$-linear category $\mathcal{D}$ 
such that the Hom-set $\mathrm{Hom}_{\mathcal{D}}(X, Y)$ 
consists of complexes of vector spaces with the associative composition  
$$
\mathrm{Hom}_{\mathcal{D}}(Y, Z) \otimes \mathrm{Hom}_{\mathcal{D}}(X, Y) \rightarrow \mathrm{Hom}_{\mathcal{D}}(X, Z),
$$ 
and the identity morphisms are 
closed in degree zero.
\end{definition}

A {\it DG functor} $F: \mathcal{D} \rightarrow \mathcal{D}'$ between DG categories is a functor such that the map 
$$
F: \mathrm{Hom}_{\mathcal{D}}(X, Y) \rightarrow \mathrm{Hom}_{\mathcal{D}'}\big(F(X), F(Y)\big)
$$
is a complex morphism.
{definition}

For example, a DG algebra is a DG category with one object. 
A homomorphism between DG algebras can be viewed as 
a DG functor between the corresponding DG categories.  

For any DG category $\mathcal{D}$, there is a $k$-linear graded  
category $\mathrm{H}(\mathcal{D})$ given as follows: 
\begin{enumerate}
	\item[(i)] $\mathrm{Ob}\big(\mathrm{H}(\mathcal{D})\big) = \mathrm{Ob}(\mathcal{D})$; 
	\item[(ii)] $\mathrm{Hom}_{\mathrm{H}(\mathcal{D})}(X, Y) 
	= \mathrm{H}^{\ast}\big( \mathrm{Hom}_{\mathcal{D}}(X, Y) \big) 
	:= \bigoplus\limits_{i\in\mathbb{Z}} \mathrm{H}^{i}
	\big( \mathrm{Hom}_{\mathcal{D}}(X, Y) \big)$, for any objects $X, Y$.
\end{enumerate}
Furthermore, denote by $\mathrm{H}^{0}(\mathcal{D})$ the category:
\begin{enumerate}
	\item[(i)] $\mathrm{Ob}\big(\mathrm{H}^{0}(\mathcal{D})\big) 
	= \mathrm{Ob}\big(\mathcal{D}\big)$; 
	\item[(ii)] $\mathrm{Hom}_{\mathrm{H}^{0}(\mathcal{D})}(X, Y) 
	= \mathrm{H}^{0}\big( \mathrm{Hom}_{\mathcal{D}}(X, Y) \big)$. 
\end{enumerate}
Naturally, any DG functor $F$ induces two functors $\mathrm{H}(F)$ and $\mathrm{H}^{0}(F)$. 

\begin{definition}
A DG functor $F: \mathcal{D} \rightarrow \mathcal{D}'$ is called a {\it quasi-equivalence} 
if 
$$
F: \mathrm{Hom}_{\mathcal{D}}(X, Y) \rightarrow 
\mathrm{Hom}_{\mathcal{D}'}\big(F(X), F(Y)\big)
$$
is a quasi-isomorphism, for any $X, Y \in \mathrm{Ob}(\mathcal{D})$, 
and $\mathrm{H}^{0}(F)$ is an equivalence. 	
\end{definition}

Now, let $\mathrm{DGCat}$ be the category whose objects are small 
DG categories and whose morphisms are DG functors. 
Consider the localization, denoted by $\mathrm{Hqe}$, of  $\mathrm{DGCat}$ 
with respect to quasi-equivalences. 
We call any morphism in $\mathrm{Hqe}$ a quasi-functor. 

For more properties of DG categories such as DG modules of DG categories 
and tensor functors between them, we refer to the papers \cite{Ke1} and \cite{KL0}.   
 
Given a small DG category $\mathcal{C}$, 
let $\mathrm{DGMod}(\mathcal{C})$ be the DG category of $\mathcal{C}$-modules, which is defined to be the set   
$\mathrm{Hom}(\mathcal{C}^{op} , \mathcal{D}(k))$ of DG functors,  
where $\mathcal{D}(k)$ is the canonical DG category 
of complexes of $k$-linear vector spaces. 
A $\mathcal{C}$-module is {\it representable} if it is contained in the essential image 
of the Yoneda DG functor 
$$
\mathrm{Y}_{dg}^{\mathcal{C}}: \mathcal{C} \rightarrow \mathrm{DGMod}(\mathcal{C}) ,
\,\,\,  X \mapsto \mathrm{Hom}_{\mathcal{C}}(-, X).  
$$

\begin{definition}
A DG category $\mathcal{C}$ is called {\it pretriangulated} if the essential image of the functor 
$$
\mathrm{H}^{0}\big(\mathrm{Y}_{dg}^{\mathcal{C}}\big): 
\mathrm{H}^{0}(\mathcal{C}) \rightarrow \mathrm{H}^{0}\big(\mathrm{DGMod}(\mathcal{C}) \big)
$$ 
is a triangulated subcategory.  	
\end{definition}




\section{The Tate-Hochschild cohomology}\label{TD}

\subsection{Generalized Tate-Hochschild complex of associative algebra}

The notion of Tate-Hochschild cohomology 
was introduced by Buchweitz  (see \cite{B1}).

\begin{definition}
Let $\Lambda$ be an associative algebra. Its $i$-th 
{\it Tate-Hochschild cohomology}, denoted by $\mathrm{HH}_{sg}^{i}(\Lambda)$, is  
$
\mathrm{Hom}_{D_{sg}(\Lambda^e)} (\Lambda, \Lambda[i]),
$
where $\Lambda^e := \Lambda \otimes \Lambda^{op}$. 	
\end{definition}

Later, for any associative algebra $\Lambda$, 
Wang defined the singular Hochschild  
complex whose cohomologies are the Tate-Hochschild cohomologies (\cite{W0}). 
This complex is constructed as the colimit of a sequence of complexes. 
He also constructed a complex, called the {\it generalized 
Tate-Hochschild complex} of $\Lambda$: 
$$
\mathcal{D}^{\ast}(\Lambda, \Lambda): \cdots \xrightarrow{b_2} C_{1}(\Lambda, \Lambda^{\vee}) \xrightarrow{b_1}  \Lambda^{\vee} \xrightarrow{\mu} \Lambda \xrightarrow{\delta^0} C^{1}(\Lambda, \Lambda) \xrightarrow{\delta^1} \cdots   
$$
whose cohomologies, in the case of $\Lambda$ being self-injective, are exactly the 
Tate-Hochschild cohomologies, where 
$\Lambda^{\vee} := \mathrm{Hom}_{\Lambda^e}(\Lambda, \Lambda \otimes \Lambda)$, 
 the differential $\mu$ is given by the multiplication 
 of $\Lambda$: $\mu(x \otimes y)=xy$, $C_{\ast}(\Lambda, \Lambda^{\vee})$ 
 is the Hochschild chain complex 
of $\Lambda$ with $\Lambda^{e}$-module $\Lambda^{\vee}$, and
$C^{\ast}(\Lambda, \Lambda)$ 
is the Hochschild cochain complex of $\Lambda$. 

In the following, we introduce a new  
complex for an
arbitrary associative algebra $A$ such that its 
cohomologies are also the Tate-Hochschild cohomologies (see
Proposition \ref{MainPro} below). 
The underlying vector space of this complex is described explicitly. 
Moreover, when $A$ is self-injective, this complex coincides with
 $\mathcal{D}^{\ast}(A, A)$. 

Let us first recall the DG quotient of a DG category, introduced by Drinfeld (see \cite{D0}).  
 
\begin{definition}
Let $\mathcal{A}$ be a DG category and $\mathcal{B} \subset \mathcal{A}$ 
be a full DG subcategory of $\mathcal{A}$. 	
A {\it DG quotient} of $\mathcal{A}$ by $\mathcal{B}$ is a diagram 
consisting of DG categories and DG functors
$$
\mathcal{A} \xleftarrow{\sim} \tilde{\mathcal{A}} \xrightarrow{\pi} \mathcal{C}
$$
satisfying that 
\begin{enumerate}
\item[(1)] the above DG functor $\tilde{\mathcal{A}} 
\xrightarrow{\sim} \mathcal{A}$ is a quasi-equivalence;
  
\item[(2)] the functor $\mathrm{H}(\pi): 
\mathrm{H}(\tilde{\mathcal{A}}) \rightarrow \mathrm{H}(\mathcal{C})$ 
between corresponding homotopy categories induced by $\pi$ is essentially surjective; 
 	
\item[(3)] $\mathrm{H}(\pi)$ gives a triangle functor 
$\tilde{\mathcal{A}}^{tr} \rightarrow \mathcal{C}^{tr}$, which induces an equivalence:     
$$
\mathcal{A}^{tr}/\mathcal{B}^{tr}  \xrightarrow{\sim} \mathcal{C}^{tr}, 
$$ 
where $\mathcal{A}^{tr}/\mathcal{B}^{tr}$ is the Verdier quotient of 
$\mathcal{A}^{tr}$ by $\mathcal{B}^{tr}$, 
 $\mathcal{A}^{tr}$ 
(resp. $\tilde{\mathcal{A}}^{tr}, \mathcal{B}^{tr}, \mathcal{C}^{tr}$) 
represents the triangulated category $\mathrm{H}(\mathcal{A}^{pretr})$ (resp. $\mathrm{H}(\tilde{\mathcal{A}}^{pretr}), 
\mathrm{H}(\mathcal{B}^{pretr}), \mathrm{H}(\mathcal{C}^{pretr})$), and $\mathcal{A}^{pretr}$ (resp. $\tilde{\mathcal{A}}^{pretr}, \mathcal{B}^{pretr}, \mathcal{C}^{pretr}$)
is a certain pre-triangulated DG category on $\mathcal{A}$ (resp. $\tilde{\mathcal{A}}, \mathcal{B}, \mathcal{C}$). 
\end{enumerate}	
\end{definition}

From \cite[\S3]{D0}, we know that, for any DG category $\mathcal{A}$ 
and its full DG subcategory $\mathcal{B} \subset \mathcal{A}$, the DG quotient of $\mathcal{A}$ by 
$\mathcal{B}$ exists. On the other hand, 
it is well-known that both $D^{b}(A)$ and $\mathrm{Perf}(A)$ are DG categories. 
Here, we denote by $\mathcal{P}\mathrm{erf}(A)$ and $\mathcal{D}^{b}(A)$ the DG 
categories corresponding to $\mathrm{Perf}(A)$ and $D^{b}(A)$ respectively. Moreover, both  
 $\mathcal{P}\mathrm{erf}(A)$ and $\mathcal{D}^{b}(A)$ have canonical pre-triangulated structures.  
Hence, 
by taking the DG quotient, $D_{sg}(A)$ is 
endowed with a DG category structure which is a DG enhancement for $D_{sg}(A)$. 
We denote by $S_{dg}(A)$ this DG category associated to $D_{sg}(A)$.

\begin{definition}
Let $A$ and $B$ be two associative algebras. $A$ and $B$ are called {\it DG singular 
equivalent} if there is a quasi-equivalence  
$$
S_{dg}(A) \xrightarrow{\sim} S_{dg}(B).  
$$	  	
\end{definition}

It is well-known that the DG singular 
equivalence between $S_{dg}(A)$ and $S_{dg}(B)$ implies that $D_{sg}(A) \simeq D_{sg}(B)$.
But, in general, singular equivalence cannot be lifted to DG singular equivalence. 
However, we have the following proposition for some special cases.

\begin{proposition}[{\cite[Proposition 3.1]{CLR}}]\label{Morita}
Keep the settings as above. Let $M$ be a $B \otimes A^{op}$-module which is a
projective $B$-module 
and projective $A^{op}$-module. 
Then the following statements are equivalent.
\begin{enumerate}
	\item[(1)] The triangle functor
	$$
	(-) \otimes^{\mathbb{L}}_{A} M: D_{sg}(A) \rightarrow D_{sg}(B)
	$$ 
	is an equivalence; 
	\item[(2)] The quasi-functor
	$$
	(-) \otimes^{\mathbb{L}}_{A} M: 
S_{dg}(A) \rightarrow S_{dg}(B)	
	$$ is a quasi-equivalence. 
\end{enumerate}
\end{proposition}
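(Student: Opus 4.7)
The direction (2) $\Rightarrow$ (1) is essentially formal: a quasi-equivalence of pretriangulated DG categories descends under $\mathrm{H}^{0}$ to an equivalence of triangulated categories, and by construction $\mathrm{H}^{0}\bigl(S_{dg}(A)\bigr) \simeq D_{sg}(A)$ and $\mathrm{H}^{0}\bigl(S_{dg}(B)\bigr) \simeq D_{sg}(B)$. So the work lies in (1) $\Rightarrow$ (2).

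The plan for (1) $\Rightarrow$ (2) is to produce an actual DG functor representing the quasi-functor and to show it satisfies the two defining conditions of a quasi-equivalence. Because $M$ is projective as an $A^{op}$-module (equivalently, as a left $A$-module) and projective as a right $B$-module, the underived tensor product $(-)\otimes_{A} M$ is already an exact functor $\mathrm{mod}(A) \to \mathrm{mod}(B)$ that sends projectives to projectives. Lifting this termwise to complexes yields an honest DG functor $\mathcal{D}^{b}(A) \to \mathcal{D}^{b}(B)$ between the canonical DG enhancements, and it restricts to a DG functor $\mathcal{P}\mathrm{erf}(A) \to \mathcal{P}\mathrm{erf}(B)$. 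By the universal property of Drinfeld's DG quotient, this descends to a quasi-functor $F: S_{dg}(A) \to S_{dg}(B)$, and on $\mathrm{H}^{0}$ it coincides with the triangle functor $(-)\otimes^{\mathbb{L}}_{A} M$ of statement (1). In particular, $\mathrm{H}^{0}(F)$ is an equivalence, which takes care of the second clause in the definition of quasi-equivalence and also of essential surjectivity.

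It remains to verify that $F$ induces quasi-isomorphisms on Hom complexes. Here I would use the key identification, built into the construction of the DG quotient, that for any $X, Y \in S_{dg}(A)$ one has
\[
\mathrm{H}^{n}\bigl(\mathrm{Hom}_{S_{dg}(A)}(X, Y)\bigr) \;\cong\; \mathrm{Hom}_{D_{sg}(A)}(X, Y[n])
\]
for every $n \in \mathbb{Z}$, and similarly for $S_{dg}(B)$. Since $F$ is a DG functor, it commutes with the shift $[n]$, so the induced map on $\mathrm{H}^{n}$ of the Hom complex is identified with the map $\mathrm{Hom}_{D_{sg}(A)}(X, Y[n]) \to \mathrm{Hom}_{D_{sg}(B)}(FX, FY[n])$ induced by $\mathrm{H}^{0}(F)$. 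By assumption (1), this latter map is a bijection for every $n$, so $F$ is a quasi-isomorphism on all Hom complexes. Together with the $\mathrm{H}^{0}$-equivalence established above, this shows $F$ is a quasi-equivalence.

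The main technical obstacle I expect is the second paragraph: ensuring that the naive termwise tensor construction really produces a well-defined morphism in $\mathrm{Hqe}$ between the specific DG quotients $S_{dg}(A)$ and $S_{dg}(B)$, rather than merely between some choice of enhancements. This requires invoking the functoriality of Drinfeld's DG quotient and checking compatibility of the inclusions $\mathcal{P}\mathrm{erf} \hookrightarrow \mathcal{D}^{b}$ with the tensor functor; both steps rely essentially on the two-sided projectivity of $M$, which is exactly why the proposition is stated under that hypothesis. Once that functoriality is in place, the comparison with the triangulated statement (1) is purely formal via the $\mathrm{H}^{n}(\mathrm{Hom}) = \mathrm{Hom}(-, -[n])$ dictionary.
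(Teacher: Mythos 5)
The paper does not give a proof of this proposition: it is quoted from \cite[Proposition~3.1]{CLR}, so there is nothing in the paper itself to compare your argument against. Taken on its own, your sketch is correct and follows the route one would expect. The direction (2)~$\Rightarrow$~(1) is indeed formal via $\mathrm{H}^{0}$. For (1)~$\Rightarrow$~(2), the key points are exactly the ones you identify: since $M$ is projective as a left $A$-module and as a right $B$-module, $(-)\otimes_{A}M$ is an exact functor taking finitely generated projectives to finitely generated projectives, so it lifts termwise to a strict DG functor on a projective-complex model of $\mathcal{D}^{b}(A)$ that sends $\mathcal{P}\mathrm{erf}(A)$ into $\mathcal{P}\mathrm{erf}(B)$; the universal property of Drinfeld's DG quotient then produces a quasi-functor $F$ on $S_{dg}$ whose $\mathrm{H}^{0}$ is the triangle functor of (1); and the identification $\mathrm{H}^{n}\big(\mathrm{Hom}_{S_{dg}(A)}(X,Y)\big)\cong\mathrm{Hom}_{D_{sg}(A)}(X,Y[n])$, combined with $\mathrm{H}^{0}(F)$ being a shift-compatible triangle functor, converts the equivalence in (1) into quasi-isomorphisms on all Hom complexes, hence a quasi-equivalence. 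The point you flag at the end --- that the termwise tensor really gives a morphism in $\mathrm{Hqe}$ from the specific quotient $S_{dg}(A)$ to $S_{dg}(B)$ and not merely between some pair of enhancements --- is the one step requiring genuine care (one must fix a DG model on which $-\otimes_{A}M$ is strict, check it is quasi-equivalent to the canonical enhancement, and check compatibility with the inclusions of $\mathcal{P}\mathrm{erf}$), but under the stated two-sided projectivity of $M$ this does go through, and this is evidently the same mechanism used in the cited reference.
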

Here, we call singularity equivalence given by some 
$B \otimes A^{op}$-module $M$ 
as above proposition {\it singularity equivalence of Morita type} (\cite{CS0}).

\begin{example}	
Let us back to Example \ref{EX1}. 
In the argument of Example \ref{EX1}, 
we know that $M$ is a projective $S[u, v]/ (f +uv)$-module, 
where $M := S[u]/(f)$. Moreover, 
it is easy to check that $M$ is a projective  $S^{op}$-module.
Hence, we get a triangle functor 
$$
(-) \otimes^{\mathbb{L}}_{S/(f)} M: D_{sg}(S/(f)) \rightarrow D_{sg}\big(S[u, v]/(f + uv)\big). 
$$
By Knorrer's periodicity theorem, this 
functor is a triangle equivalence. Finally, by Proposition \ref{Morita}, 
we get that it is in fact an equivalence
$$
S_{dg}(S/(f)) \simeq S_{dg}\big(S[u, v]/(f + uv) \big). 
$$ 
  
\end{example}

In \cite[Theorem 1.1]{Ke0}, Keller realized the Tate-Hochschild cohomologies 
of an algebra $A$ as the Hochschild cohomologies of $S_{dg}(A)$.  

\begin{theorem}[\cite{Ke0},Theorem 1.1]\label{Th2}
 There is a canonical isomorphism of graded algebras between the 
 Tate-Hochschild cohomologies $\mathrm{HH}^{\ast}_{sg}(A)$ 
 of $A$ and 
 the Hochschild cohomologies of $S_{dg}(A)$. 
\end{theorem}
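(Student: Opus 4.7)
The plan is to interpret both sides of the claimed isomorphism as derived endomorphism algebras of a distinguished bimodule and to bridge them by a comparison quasi-functor. The left-hand side is already in this form: by definition $\mathrm{HH}^{\ast}_{sg}(A) = \bigoplus_{i} \mathrm{Hom}_{D_{sg}(A^e)}(A, A[i])$, with graded algebra structure given by Yoneda product coming from composition in $D_{sg}(A^e)$. For the right-hand side, I would invoke the standard description of Hochschild cohomology of a small DG category $\mathcal{C}$ as
\[
\mathrm{HH}^{\ast}(\mathcal{C}) \;=\; \mathrm{Ext}^{\ast}_{\mathcal{C}^e}(\mathcal{C}, \mathcal{C}),
\]
where the right-hand $\mathcal{C}$ is the diagonal bimodule $(X,Y) \mapsto \mathrm{Hom}_{\mathcal{C}}(X,Y)$ and the product is again Yoneda. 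Applied with $\mathcal{C} = S_{dg}(A)$, this reduces the theorem to comparing two derived Hom computations of diagonal bimodules.

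To build the comparison, every bimodule $M \in \mathcal{D}^b(A^e)$ defines a DG endofunctor $(-) \otimes_A^{\mathbb{L}} M$ of $\mathcal{D}^b(A)$, and when $M \in \mathcal{P}\mathrm{erf}(A^e)$ this endofunctor preserves $\mathcal{P}\mathrm{erf}(A)$. By the universal property of the DG quotient, the assignment descends to a quasi-functor from $S_{dg}(A^e)$ to the DG category of DG endofunctors of $S_{dg}(A)$; passing through the Yoneda-style identification between endofunctors and bimodules yields a quasi-functor
\[
\Phi : S_{dg}(A^e) \longrightarrow \mathrm{DGMod}\big(S_{dg}(A)^e\big),
\]
sending $A$ to the diagonal bimodule $\Delta$ of $S_{dg}(A)$. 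Because $\Phi$ is an honest quasi-functor, composition on the source matches Yoneda product on the target, producing a canonical morphism of graded algebras
\[
\Phi^{\ast} : \mathrm{HH}^{\ast}_{sg}(A) \longrightarrow \mathrm{HH}^{\ast}\big(S_{dg}(A)\big).
\]

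The main obstacle is showing that $\Phi^{\ast}$ is bijective in each degree. This asks, at the level of derived Hom, that taking the DG quotient commutes with forming the enveloping DG category, whereas $S_{dg}(A^e)$ and $S_{dg}(A)^e$ are \textit{a priori} different DG categories and the tensor product of DG categories does not in general commute with DG localizations. The approach I would pursue is to resolve $A$ by a homotopically projective bimodule in $\mathcal{D}^b(A^e)$ and compute both sides via Drinfeld's explicit model of the DG quotient, in which morphisms are represented by zig-zags with one leg in the subcategory being killed. The technical heart is then to verify that each zig-zag of bimodule morphisms for $\Delta$ in $S_{dg}(A)^e$ comes, canonically and uniquely up to homotopy, from a zig-zag for $A$ in $S_{dg}(A^e)$; this uses in an essential way that tensoring with a $\mathcal{P}\mathrm{erf}(A^e)$-bimodule sends bounded complexes to perfect complexes and hence is contractible after passing to $S_{dg}(A)$. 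Multiplicativity is then automatic, because the construction of $\Phi$ is through an honest quasi-functor and composition of endofunctors corresponds to the Yoneda product on both sides.
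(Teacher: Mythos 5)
The paper does not supply a proof of this statement; it is quoted verbatim from Keller \cite[Theorem 1.1]{Ke0}, so there is no internal argument to compare yours against. Read as a freestanding attempt, your sketch correctly isolates the crux --- that $S_{dg}(A^e)$ and $S_{dg}(A)^e$ are not \emph{a priori} the same DG category and that the bridge between them is exactly what must be built --- but the argument you offer has two genuine holes.

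The first is in the construction of $\Phi$. You assert that every $M \in \mathcal{D}^b(A^e)$ yields a DG endofunctor $(-) \otimes^{\mathbb{L}}_A M$ of $\mathcal{D}^b(A)$, and that for $A^e$-perfect $M$ this endofunctor preserves $\mathcal{P}\mathrm{erf}(A)$ and kills $S_{dg}(A)$. Neither step holds in the paper's setting of finitely generated modules: already for $M = A^e$ and $N$ a finitely generated $A$-module with $\dim_k N = \infty$, one has $N \otimes_A A^e \cong N \otimes_k A$, a free right $A$-module of infinite rank, so the functor does not even land in $\mathcal{D}^b(A)$. This can be rescued for $A$ finite-dimensional over $k$, or by passing to larger (co)module categories, but it needs an explicit hypothesis and argument, and for the commutative rings of this paper it does not come for free. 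The second and deeper hole is that the proof that $\Phi^{\ast}$ is bijective is deferred to ``verify that each zig-zag of bimodule morphisms for $\Delta$ lifts canonically and uniquely up to homotopy'' --- that verification \emph{is} the theorem, and you give no mechanism that would produce existence, let alone uniqueness, of such lifts. Keller's proof does not proceed by a direct comparison of Drinfeld quotient models; it rests on a localization theorem for the Hochschild cochain complex attached to a localization pair of pretriangulated DG categories, and even with that machinery the original note later required a correction. The compatibility of the DG quotient operation with the passage to the enveloping DG category is genuinely delicate, and it is the entire content of the result rather than a step one can outsource to a homotopy-uniqueness hand-wave.
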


 Next we recall the following proposition (see \cite[1.3.1]{D0}). 
  
\begin{proposition}\label{DGHo0}
Let $M^{\ast}, N^{\ast} \in \mathrm{Ob}\big(S_{dg}(A)\big)$. Then 
$$
\mathrm{Hom}_{S_{dg}(A)}(M^{\ast}, N^{\ast}) \cong 
\mathrm{Cone}\big(h_{N^{\ast}} \otimes_{\mathcal{P}\mathrm{erf}(A)}^{\mathbb{L}} \tilde{h}_{M^{\ast}}  
\rightarrow  \mathrm{Hom}_{\mathcal{D}^{b}(A)}(M^{\ast}, N^{\ast})\big)\quad\textup{in}\,\, D(k),$$
where $h_{N^{\ast}}(-) := \mathrm{Hom}_{\mathcal{D}^{b}(A)}(-, N^{\ast})$ 
as a $\mathcal{P}\mathrm{erf}(A)$-module, $\tilde{h}_{M^{\ast}}(-) := \mathrm{Hom}_{\mathcal{D}^{b}(A)}(M^{\ast}, -)$  
as a $\mathcal{P}\mathrm{erf}(A)^{op}$-module 
and $\mathcal{P}\mathrm{erf}(A)^{op}$ is the opposite DG category of $\mathcal{P}\mathrm{erf}(A)$. 	
\end{proposition}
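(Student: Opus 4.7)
The plan is to deduce the statement from Drinfeld's explicit construction of the DG quotient in \cite{D0}. Take $\mathcal{A} = \mathcal{D}^{b}(A)$ and $\mathcal{B} = \mathcal{P}\mathrm{erf}(A)$. One standard model for $\mathcal{A}/\mathcal{B}$ (equivalent to $S_{dg}(A)$ in $\mathrm{Hqe}$) has the same objects as $\mathcal{A}$ and morphism complexes built by freely adjoining, for each $B \in \mathcal{B}$, a degree $-1$ endomorphism $\epsilon_{B}$ subject to $d\epsilon_{B} = \mathrm{id}_{B}$. Explicitly, a morphism from $M^{\ast}$ to $N^{\ast}$ in this model is a $k$-linear combination of zig-zags
$$M^{\ast} \xrightarrow{f_{0}} B_{1} \xrightarrow{\epsilon_{B_{1}}} B_{1} \xrightarrow{f_{1}} B_{2} \xrightarrow{\epsilon_{B_{2}}} \cdots \xrightarrow{\epsilon_{B_{n}}} B_{n} \xrightarrow{f_{n}} N^{\ast}$$
with $n \geq 0$ and $B_{i} \in \mathcal{P}\mathrm{erf}(A)$. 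So I would begin by fixing this presentation and unpacking the resulting graded vector space.

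Next I would split off the length-zero piece ($n = 0$), which is manifestly $\mathrm{Hom}_{\mathcal{D}^{b}(A)}(M^{\ast}, N^{\ast})$, from the positive-length piece. Summing the length $n \geq 1$ terms over all choices of $B_{i}$, and accounting for the total shift coming from the $n$ copies of $\epsilon_{B_{i}}$ of degree $-1$, one recognizes this piece as the shifted two-sided bar complex
$$\bigoplus_{n \geq 1} \bigoplus_{B_{1},\dots,B_{n}} \mathrm{Hom}(B_{n}, N^{\ast}) \otimes \mathrm{Hom}(B_{n-1}, B_{n}) \otimes \cdots \otimes \mathrm{Hom}(M^{\ast}, B_{1})\,[1],$$
which, since the bar construction provides a semi-free resolution over $\mathcal{P}\mathrm{erf}(A)$, computes $\bigl(h_{N^{\ast}} \otimes^{\mathbb{L}}_{\mathcal{P}\mathrm{erf}(A)} \tilde{h}_{M^{\ast}}\bigr)[1]$.

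It then remains to analyze the quotient differential. Applying the Leibniz rule together with $d\epsilon_{B} = \mathrm{id}_{B}$ produces three types of contributions: the internal differentials of the factors $f_{i}$; a bar-type term that contracts consecutive strings $\cdots f_{i-1} \epsilon_{B_{i}} f_{i} \cdots$ to $\cdots (f_{i-1} \circ f_{i}) \cdots$; and, on the minimal length-one zig-zag $M^{\ast} \xrightarrow{f_{0}} B \xrightarrow{f_{1}} N^{\ast}$, a term landing in the length-zero piece as the composition $f_{1} \circ f_{0}$. The first two assemble the usual bar differential, while the third realizes exactly the composition map $h_{N^{\ast}} \otimes^{\mathbb{L}}_{\mathcal{P}\mathrm{erf}(A)} \tilde{h}_{M^{\ast}} \to \mathrm{Hom}_{\mathcal{D}^{b}(A)}(M^{\ast}, N^{\ast})$. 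Matching this with the model $\mathrm{Cone}(f) = Y \oplus X[1]$ produces the stated quasi-isomorphism. The main obstacle is careful sign and shift bookkeeping, in particular identifying the positive-length piece with the bar resolution up to the correct suspension and verifying that the cone's connecting piece agrees with the composition map; once this is set up, verifying that the semi-free property of the bar resolution makes the tensor product computed derivedly is routine.
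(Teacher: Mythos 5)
Your proof is correct and follows the expected route: the paper does not give its own argument for this proposition but simply cites Drinfeld's \cite[1.3.1]{D0}, and what you have written out is precisely the unwinding of Drinfeld's explicit model for the DG quotient (adjoining a contracting homotopy $\epsilon_B$ of degree $-1$ for each $B$ in the subcategory). Your identification of the length-zero piece with $\mathrm{Hom}_{\mathcal{D}^b(A)}(M^*,N^*)$, of the positive-length piece with the shifted two-sided bar complex $B(h_{N^*}, \mathcal{P}\mathrm{erf}(A), \tilde{h}_{M^*})[1]$, and of the connecting differential with composition is the content of Drinfeld's computation. The only points worth flagging, which you implicitly handle, are (i) this free-adjunction model computes the DG quotient up to quasi-equivalence because $k$ is a field, so homotopical flatness is automatic, and (ii) for the same reason the bar complex is a genuine resolution of the diagonal bimodule and hence the positive-length piece really does compute the derived tensor product rather than an underived one. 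So the proposal is not a genuinely different argument; it is a correct expansion of the citation the paper leans on.
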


In the above proposition,
the morphism $h_{N^{\ast}} \otimes_{\mathcal{P}\mathrm{erf}(A)}^{\mathbb{L}} \tilde{h}_{M^{\ast}} 
 \rightarrow  \mathrm{Hom}_{\mathcal{D}^{b}(A)}(M^{\ast}, N^{\ast})$ is given by the composition of morphisms in $\mathcal{D}^{b}(A)$. 
In the meantime, we have the following. 

\begin{lemma}\label{DGHo1}
With the setting of the above proposition, 
$$
h_{N^{\ast}} \otimes_{\mathcal{P}\mathrm{erf}(A)}^{\mathbb{L}} \tilde{h}_{M^{\ast}} 
\cong N^{\ast} \otimes^{\mathbb{L}}_{A} (M^{\ast})^{\vee^ \mathbb{L}}  
$$
in $D(k)$, where $(M^{\ast})^{\vee^ \mathbb{L}}
: = \mathbb{R}\mathrm{Hom}_{A}(M^{\ast}, A)$ is an object of $D^{b}(A^{op})$. 	
\end{lemma}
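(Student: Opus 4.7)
The plan is to reduce the tensor product over the large DG category $\mathcal{P}\mathrm{erf}(A)$ to a tensor product over the one-object DG subcategory generated by $A$, and then identify the restrictions of the two Hom-modules explicitly.

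First I would observe that $A$ (viewed as a stalk complex concentrated in degree zero) is a compact generator of $\mathcal{P}\mathrm{erf}(A)$ in the pretriangulated sense: every object of $\mathcal{P}\mathrm{erf}(A)$ is a retract of an iterated cone of shifts of $A$. Let $\mathcal{A} \subset \mathcal{P}\mathrm{erf}(A)$ denote the full DG subcategory on this single object, so that $\mathcal{A}$ is the one-object DG category whose endomorphism DG algebra is $A$. Then the inclusion $\iota: \mathcal{A} \hookrightarrow \mathcal{P}\mathrm{erf}(A)$ is a Morita equivalence of DG categories. As such, it preserves derived tensor products: for any right $\mathcal{P}\mathrm{erf}(A)$-module $\mathcal{M}$ and left $\mathcal{P}\mathrm{erf}(A)$-module $\mathcal{N}$, one has a natural isomorphism
$$
\mathcal{M} \otimes^{\mathbb{L}}_{\mathcal{P}\mathrm{erf}(A)} \mathcal{N} \;\cong\; \iota^{*}\mathcal{M} \otimes^{\mathbb{L}}_{A} \iota^{*}\mathcal{N}
$$
in $D(k)$.

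Applying this formula to $\mathcal{M} = h_{N^{\ast}}$ and $\mathcal{N} = \tilde{h}_{M^{\ast}}$, it remains only to compute the restrictions. Evaluating at the single object $A$ gives $\iota^{*}h_{N^{\ast}} = \mathrm{Hom}_{\mathcal{D}^{b}(A)}(A, N^{\ast}) \cong N^{\ast}$ as a right DG $A$-module (the action coming from $\mathrm{End}_{A}(A) \cong A$), and $\iota^{*}\tilde{h}_{M^{\ast}} = \mathrm{Hom}_{\mathcal{D}^{b}(A)}(M^{\ast}, A) \cong \mathbb{R}\mathrm{Hom}_{A}(M^{\ast}, A) = (M^{\ast})^{\vee^{\mathbb{L}}}$ as a left DG $A$-module; this uses that the canonical DG enhancement $\mathcal{D}^{b}(A)$ is constructed so that its Hom-complexes genuinely compute derived Homs. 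Substituting these into the Morita identification yields the desired isomorphism.

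The main obstacle is justifying the Morita reduction rigorously in the DG setting. One must verify that restriction-of-scalars along the fully faithful inclusion $\iota$, together with its left adjoint (induction), descends to an equivalence between the relevant subcategories of DG modules generated by representables, so that tensor products computed after restriction agree with those computed before. The compatibility of $\iota^{*}$ with tensor products follows once one constructs a semi-free (or cofibrant) resolution of $\tilde{h}_{M^{\ast}}$ as a $\mathcal{P}\mathrm{erf}(A)^{op}$-module; such a resolution can be built from a resolution over $A$ precisely because $A$ generates $\mathcal{P}\mathrm{erf}(A)$ as a pretriangulated DG category. Once these technical points are settled, the identification of the two restrictions is immediate and the lemma follows.
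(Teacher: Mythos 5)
Your proposal is correct and follows essentially the same route as the paper: both arguments hinge on Keller's Morita theorem for DG categories (\cite[Theorem 8.1]{Ke1}), applied to the inclusion of the one-object DG category on $A$ into $\mathcal{P}\mathrm{erf}(A)$, and both then identify the restrictions $\iota^* h_{N^*} \cong N^*$ and $\iota^* \tilde h_{M^*} \cong (M^*)^{\vee^{\mathbb L}}$. The difference is one of implementation: you invoke directly the general principle that a DG Morita equivalence commutes with derived tensor products (which indeed follows from the same Keller result, since $\iota_!$ and $\iota^*$ become inverse equivalences and $\iota_!$ interacts with $\otimes^{\mathbb L}$ via the usual base-change adjunction), whereas the paper makes this concrete by writing both sides via bar-type complexes, exhibiting an explicit inclusion $\eta_{N^*}^{M^*}$ of the small complex into the large one, and then proving $\eta_{N^*}^{M^*}$ is a quasi-isomorphism by applying $\mathbb{R}\mathrm{Hom}_k(-,k)$ and using the restriction equivalence $(\Psi_A^{op})_*$ on the dualized (Hom) side. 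Your version is shorter and more conceptual; the paper's is more self-contained in that it never needs to state or cite the tensor-compatibility of Morita equivalences as a separate fact, instead reducing it to the Hom-side statement which is literally what Keller proves. Either way the argument is sound, provided one does fill in the base-change step you flag (constructing a cofibrant resolution of $\tilde h_{M^*}$ over $\mathcal{P}\mathrm{erf}(A)^{op}$ built from one over $A^{op}$), which is exactly the technical content that the paper's dualization maneuver sidesteps.
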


\begin{proof}
Consider the natural inclusion functor of DG categories  
$$
 \Psi_A: A^{op} \rightarrow \mathcal{P}\mathrm{erf}(A),
$$
sending the unique object of $A^{op}$ to $A$-module complex $A$. 
By the result of Keller in \cite[Theorem 8.1]{Ke1}, the induced functor 
$$
(\Psi_A)_{*}: D\big( \mathcal{P}\mathrm{erf}(A) \big) \rightarrow D(A^{op})
$$ gives a triangulated equivalence. 
In the same way, we get the equivalence
$$
(\Psi^{op}_A)_{*}: D\big( \mathcal{P}\mathrm{erf}(A)^{op} \big) \rightarrow D(A). 
$$ 

By the definition of tensor product of DG modules over 
a DG category (see \cite[3.5]{KL0}), we know that  
\begin{align*}
& h_{N^{\ast}} \otimes_{\mathcal{P}\mathrm{erf}(A)} \tilde{h}_{M^{\ast}} \\
& \cong \mathrm{Cone}\Big( \bigoplus_{X, Y \in \mathcal{P}\mathrm{erf}(A)} h_{N^{\ast}}(X) \otimes_{k} 
 \mathrm{Hom}_{\mathcal{P}\mathrm{erf}(A)}(Y, X)  \otimes_{k} \tilde{h}_{M^{\ast}}(Y) \\
&\quad\quad\quad\quad\quad\quad \rightarrow \bigoplus_{X \in \mathcal{P}\mathrm{erf}(A)} h_{N^{\ast}}(X) \otimes_{k} \tilde{h}_{M^{\ast}}(X) \Big)\\
& \cong \mathrm{Cone}\Big( \bigoplus_{X, Y \in \mathcal{P}\mathrm{erf}(A)} \mathrm{Hom}_{\mathcal{D}^{b}(A)}(X, N^{\ast}) \otimes_{k}  
\mathrm{Hom}_{\mathcal{P}\mathrm{erf}(A)}(Y, X)  \otimes_{k} \mathrm{Hom}_{\mathcal{D}^{b}(A)}(M^{\ast}, Y) \\
&\quad\quad\quad\quad\quad\quad
\rightarrow \bigoplus_{X \in \mathcal{P}\mathrm{erf}(A)} \mathrm{Hom}_{\mathcal{D}^{b}(A)}(X, N^{\ast})
 \otimes_{k} \mathrm{Hom}_{\mathcal{D}^{b}(A)}(M^{\ast}, X) \Big). 
\end{align*}
If we replace $N^{\ast}$
by its projective $A$-module resolution $P_{N^{*}}$, $h_{N^{\ast}} \otimes_{\mathcal{P}\mathrm{erf}(A)}^{\mathbb{L}} \tilde{h}_{M^{\ast}}$ is isomorphic to
$
h_{P_{N^{\ast}}} \otimes_{\mathcal{P}\mathrm{erf}(A)} \tilde{h}_{M^{\ast}}$ by the result of
\cite[3.5]{KL0}.
It follows that 
\begin{align*}
& h_{N^{\ast}} \otimes_{\mathcal{P}\mathrm{erf}(A)}^{\mathbb{L}} \tilde{h}_{M^{\ast}} \\
& \cong \mathrm{Cone}\Big( \bigoplus_{X, Y \in \mathcal{P}\mathrm{erf}(A)} 
h_{P_{N^{\ast}}}(X) \otimes_{k}  \mathrm{Hom}_{\mathcal{P}\mathrm{erf}(A)}(Y, X)   \otimes_{k} \tilde{h}_{M^{\ast}}(Y) \\
&\quad\quad\quad\quad\quad\quad
 \rightarrow \bigoplus_{X \in \mathcal{P}\mathrm{erf}(A)} h_{P_{N^{\ast}}}(X) \otimes_{k} \tilde{h}_{M^{\ast}}(X) \Big)\\
& \cong \mathrm{Cone}\Big( \bigoplus_{X, Y \in \mathcal{P}\mathrm{erf}(A)} 
\mathrm{Hom}_{\mathcal{D}^{b}(A)}(X, P_{N^{\ast}}) \otimes_{k}  \mathrm{Hom}_{\mathcal{P}\mathrm{erf}(A)}(Y, X)  
\otimes_{k} \mathrm{Hom}_{\mathcal{D}^{b}(A)}(M^{\ast}, Y) \\
& \quad\quad\quad\quad\quad\quad\rightarrow
 \bigoplus_{X \in \mathcal{P}\mathrm{erf}(A)} \mathrm{Hom}_{\mathcal{D}^{b}(A)}
 (X, P_{N^{\ast}}) \otimes_{k} \mathrm{Hom}_{\mathcal{D}^{b}(A)}(M^{\ast}, X) \Big). 
\end{align*}
On the other hand, we have 
\begin{align*}
  N^{\ast} \otimes^{\mathbb{L}}_{A} (M^{\ast})^{\vee^ \mathbb{L}} & \cong  P_{N^{\ast}} \otimes_{A} (M^{\ast})^{\vee^ \mathbb{L}} \\
  & \cong \mathrm{Hom}_{\mathcal{D}^{b}(A)}(A, P_{N^{\ast}}) \otimes_{A} \mathrm{Hom}_{\mathcal{D}^{b}(A)}(M^{\ast}, A)\\
  & \cong  \mathrm{Cone}\Big( \mathrm{Hom}_{\mathcal{D}^{b}(A)}(A, P_{N^{\ast}})
   \otimes \mathrm{Hom}_{\mathcal{D}^{b}(A)}(A, A) \otimes \mathrm{Hom}_{\mathcal{D}^{b}(A)}(M^{\ast}, A) \\
  &\quad\quad\quad\quad\quad\quad
   \rightarrow  \mathrm{Hom}_{\mathcal{D}^{b}(A)}(A, P_{N^{\ast}}) \otimes \mathrm{Hom}_{\mathcal{D}^{b}(A)}(M^{\ast}, A) \Big).
\end{align*} 
Hence, we obtain a morphism between complexes  
$$
\eta_{N^{*}}^{M^*}: N^{\ast} \otimes^{\mathbb{L}}_{A} (M^{\ast})^{\vee^ \mathbb{L}} \hookrightarrow h_{N^{\ast}} \otimes_{\mathcal{P}\mathrm{erf}(A)}^{\mathbb{L}} \tilde{h}_{M^{\ast}}
$$
given by 
\begin{align*}
& \mathrm{Cone}\Big( \mathrm{Hom}_{D^{b}(A)}(A, P_{N^{\ast}}) \otimes 
\mathrm{Hom}_{D^{b}(A)}(A, A) \otimes \mathrm{Hom}_{D^{b}(A)}(M^{\ast}, A) \\
  &\quad\quad\quad\quad\quad\quad
   \rightarrow  \mathrm{Hom}_{D^{b}(A)}(A, P_{N^{\ast}}) \otimes 
   \mathrm{Hom}_{D^{b}(A)}(M^{\ast}, A) \Big)  \\
\hookrightarrow& \mathrm{Cone}\Big( \bigoplus_{X, Y \in \mathcal{P}\mathrm{erf}(A)} 
\mathrm{Hom}_{\mathcal{D}^{b}(A)}(X, P_{N^{\ast}}) \otimes_{k}  \mathrm{Hom}_{\mathcal{P}\mathrm{erf}(A)}(Y, X)  \otimes_{k} \mathrm{Hom}_{\mathcal{D}^{b}(A)}(M^{\ast}, Y) \\
&\quad\quad\quad\quad\quad\quad
 \rightarrow \bigoplus_{X \in \mathcal{P}\mathrm{erf}(A)} 
 \mathrm{Hom}_{\mathcal{D}^{b}(A)}(X, P_{N^{\ast}}) \otimes_{k} \mathrm{Hom}_{\mathcal{D}^{b}(A)}(M^{\ast}, X) \Big). 
\end{align*}
Obviously, $\eta_{N^{*}}^{M^*}$ is a quasi-isomorphism if and only if the induced morphism 
$$
\mathbb{R}\mathrm{Hom}_{k}(\eta_{N^{*}}^{M^*}, k): 
\mathbb{R}\mathrm{Hom}_{k}\big(h_{N^{\ast}} \otimes_{\mathcal{P}\mathrm{erf}(A)}^{\mathbb{L}} \tilde{h}_{M^{\ast}}, k\big) 
\twoheadrightarrow
\mathbb{R}\mathrm{Hom}_{k}\big(N^{\ast} \otimes^{\mathbb{L}}_{A} (M^{\ast})^{\vee^ \mathbb{L}}, k\big)
$$
is a quasi-isomorphism. 

Now, by the property of adjunction functors, we obtain 
$$
\mathbb{R}\mathrm{Hom}_{k}\big(h_{N^{\ast}} \otimes_{\mathcal{P}\mathrm{erf}(A)^{op}}^{\mathbb{L}} \tilde{h}_{M^{\ast}}, k\big) \cong \mathrm{Hom}_{\mathcal{P}\mathrm{erf}(A)}\big(h_{N^{\ast}},  \mathbb{D}(\tilde{h}_{M^{\ast}}) \big)
$$
and 
$$
\mathbb{R}\mathrm{Hom}_{k}\big(N^{\ast} \otimes^{\mathbb{L}}_{A} (M^{\ast})^{\vee^ \mathbb{L}}, k\big) \cong \mathbb{R}\mathrm{Hom}_{A}\big(N^{*}, \mathbb{D}((M^{\ast})^{\vee^ \mathbb{L}}) \big),
$$
where $\mathbb{D}(-) := \mathbb{R}\mathrm{Hom}_{k}(-, k)$. 
Note that for any object $X$ in $\mathcal{P}\mathrm{erf}(A)$, 
$\mathbb{D}(\tilde{h}_{M^{\ast}})(X) := \mathbb{D}\big(\tilde{h}_{M^{\ast}}(X)\big)$. 
Meanwhile, we have
$$
(\Psi^{op}_A)_{*}(h_{N^*}) \cong N^{*}\quad
\textup{
and}
\quad
(\Psi^{op}_A)_{*}(\tilde{h}_{M^*}) \cong (M^{\ast})^{\vee^ \mathbb{L}}.
$$
Hence,
$$
(\Psi^{op}_A)_{*}\big(\mathbb{D}(\tilde{h}_{M^*})\big) \cong \mathbb{D}\big((M^{\ast})^{\vee^ \mathbb{L}}\big).
$$
Since $\Psi^{op}_A$ gives a triangulated equivalence, we obtain that
$\mathrm{Hom}_{\mathcal{P}\mathrm{erf}(A)^{op}}\big(h_{N^{\ast}}, 
 \mathbb{D}(\tilde{h}_{M^{\ast}}) \big)$ is isomorphic to 
$\mathbb{R}\mathrm{Hom}_{A}\big(N^{*}, \mathbb{D}((M^{\ast})^{\vee^ \mathbb{L}}) \big)$ 
in $D(k)$. 
Then $\mathbb{R}\mathrm{Hom}_{k}(\eta_{N^{*}}^{M^*}, k)$ is a quasi-isomorphism, which implies
\begin{align*}
h_{N^{\ast}} \otimes_{\mathcal{P}\mathrm{erf}(A)}^{\mathbb{L}} \tilde{h}_{M^{\ast}} \cong N^{\ast} \otimes^{\mathbb{L}}_{A} (M^{\ast})^{\vee^ \mathbb{L}} 
\end{align*}
in $D(k)$. Thus, we complete the proof. 
\end{proof}

Combining Proposition \ref{DGHo0} with Lemma \ref{DGHo1},  we obtain  
$$
\mathrm{Hom}_{S_{dg}(A)}(M^{\ast}, N^{\ast}) \cong \mathrm{Cone}\big(
N^{\ast} \otimes^{\mathbb{L}}_{A} (M^{\ast})^{\vee^ \mathbb{L}} \rightarrow 
\mathrm{Hom}_{\mathcal{D}^{b}(A)}(M^{\ast}, N^{\ast})\big)  
$$
in $D(k)$. Now let $R$ be a commutative Noetherian ring, and set $A = R^{e}$ 
and $M^{\ast} = N^{\ast} = R$ as $R \otimes R^{op}$-module in the above isomorphism, we get 
\begin{align*}
& \mathrm{HH}^{i}_{sg}(R) := \mathrm{Hom}_{D_{sg}(R^e)}(R, R[i]) = \mathrm{H}^{i}\big(\mathrm{Hom}_{S_{dg}(R^e)}(R, R)\big) \\
& \cong \mathrm{H}^{i}\left(\mathrm{Cone}\big(
R \otimes^{\mathbb{L}}_{R^e} (R)^{\vee^ \mathbb{L}} \rightarrow \mathrm{Hom}_{\mathcal{D}^{b}(R^e)}(R, R)\big)\right) 
\end{align*} 
for any $i \in \mathbb{Z}$.

Now let us consider the Bar resolution $\mathrm{Bar}(R) \twoheadrightarrow R$ over $R^e$. 
There is a double complex: 
$$
\scalebox{0.8}{
\xymatrixcolsep{1pc}
\xymatrix{ 
 & 0  \ar[d] & 0  \ar[d]  \\
 \ar[r] & (R \otimes R \otimes R) \otimes_{R^e}
 \mathrm{Hom}_{R^e}\big( R \otimes R,  R \otimes R) 
  \ar[r] \ar[d] & (R \otimes R) \otimes_{R^e} 
  \mathrm{Hom}_{R^e}\big( R \otimes R,  R \otimes R)   
  \ar[r] \ar[d] & 0  \\
\ar[r] & (R \otimes R \otimes R) \otimes_{R^e} 
\mathrm{Hom}_{R^e}\big( R \otimes R \otimes R,  R \otimes R)   
\ar[r] \ar[d] & (R \otimes R) \otimes_{R^e} 
\mathrm{Hom}_{R^e}\big( R \otimes R \otimes R,  R \otimes R) \ar[r] \ar[d] & 0 \\
  & \vdots  & \vdots   }
}
$$
given by $\mathrm{Bar}(R) \otimes_{R^e} \mathbb{R}\mathrm{Hom}_{R^e}\big(\mathrm{Bar}(R), R^e \big)$. 
View this double complex as a cochain complex, 
and denote it by $E^{\ast}$. 
Also, denote by $F^\ast$ the cochain complex obtained from the double
complex $\mathbb{R}\mathrm{Hom}_{R^e}(\mathrm{Bar}(R), \mathrm{Bar}(R))$. 
We get a complex
$\mathrm{Cone}(E^{\ast} \rightarrow F^{\ast} )$
and we obtain the following proposition.

\begin{proposition}\label{MainPro}
With the above settings, 
$$
\mathrm{HH}^{i}_{sg}(R) \cong \mathrm{H}^{i}\big(\mathrm{Cone}(
E^{\ast} \rightarrow F^{\ast} ) \big)
$$
for any $i \in \mathbb{Z}$. 
\end{proposition}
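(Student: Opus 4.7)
The plan is to derive the claim by identifying $E^{\ast}$ and $F^{\ast}$ as explicit Bar-resolution models of the two terms appearing in the derived cone isomorphism already obtained in the excerpt, and by checking that the natural composition map $E^{\ast}\to F^{\ast}$ realizes, at the chain level, the abstract evaluation morphism. Specifically, combining Proposition \ref{DGHo0} with Lemma \ref{DGHo1} applied to $A=R^{e}$ and $M^{\ast}=N^{\ast}=R$ already gives
$$\mathrm{HH}^{i}_{sg}(R)\;\cong\;\mathrm{H}^{i}\Big(\mathrm{Cone}\big(R\otimes^{\mathbb{L}}_{R^{e}} R^{\vee^{\mathbb{L}}}\longrightarrow \mathbb{R}\mathrm{Hom}_{R^{e}}(R,R)\big)\Big).$$
So the task reduces to writing both sides of this cone in terms of Bar complexes and matching the connecting morphism.

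First, I would use the Bar resolution $\mathrm{Bar}(R)\xrightarrow{\sim} R$ as a projective $R^{e}$-resolution on both arguments. Since each $\mathrm{Bar}(R)_{n}$ is a projective $R^{e}$-module, one has
$$R\otimes^{\mathbb{L}}_{R^{e}} R^{\vee^{\mathbb{L}}}\;\simeq\;\mathrm{Bar}(R)\otimes_{R^{e}}\mathrm{Hom}_{R^{e}}\big(\mathrm{Bar}(R),R^{e}\big),$$
whose totalization is by definition $E^{\ast}$. Likewise,
$$\mathbb{R}\mathrm{Hom}_{R^{e}}(R,R)\;\simeq\;\mathrm{Hom}_{R^{e}}\big(\mathrm{Bar}(R),R\big)\;\simeq\;\mathrm{Hom}_{R^{e}}\big(\mathrm{Bar}(R),\mathrm{Bar}(R)\big),$$
the last quasi-isomorphism coming from the fact that $\mathrm{Bar}(R)\to R$ is a quasi-isomorphism and that $\mathrm{Hom}_{R^{e}}(\mathrm{Bar}(R),-)$ preserves quasi-isomorphisms between bounded-above complexes. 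Totalizing gives $F^{\ast}$.

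The substantive step is showing that the natural map $E^{\ast}\to F^{\ast}$ — induced by the evaluation pairing
$$x\otimes f\;\longmapsto\;\bigl(y\mapsto f(y)\cdot x\bigr),\qquad x,y\in\mathrm{Bar}(R),\ f\in\mathrm{Hom}_{R^{e}}(\mathrm{Bar}(R),R^{e})$$
— represents, in $D(k)$, the composition morphism of Proposition \ref{DGHo0} after the identification of Lemma \ref{DGHo1}. This is an instance of naturality: the quasi-isomorphism $h_{P_{N^{\ast}}}\otimes_{\mathcal{P}\mathrm{erf}(A)}\tilde h_{M^{\ast}}\simeq P_{N^{\ast}}\otimes_{A}(M^{\ast})^{\vee^{\mathbb{L}}}$ constructed in the proof of Lemma \ref{DGHo1} is realized by restricting the perfect-category indexing to the single object $A$, and under this restriction the abstract composition pairing in $\mathcal{P}\mathrm{erf}(A)$ is precisely the evaluation map displayed above. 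Passing to cones and taking $\mathrm{H}^{i}$ yields the proposition.

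The main obstacle I anticipate is the last compatibility: verifying that the evaluation map on $E^{\ast}$ agrees, as a morphism in $D(k)$ and not merely after some further duality, with the composition map that Lemma \ref{DGHo1} traced through $\mathbb{D}(-)$ and adjunction. I would handle this by giving the direct identification on the $A$-summand of the bar-type construction of the tensor product of DG modules in the proof of Lemma \ref{DGHo1}, where the connecting morphism to $\mathrm{Hom}_{\mathcal{D}^{b}(A)}(M^{\ast},N^{\ast})$ is defined by literal composition, so that no dualization is needed to compare the two maps. Everything else is either projectivity of $\mathrm{Bar}(R)$ or standard functoriality of totalization.
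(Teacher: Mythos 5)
Your proposal is correct and follows essentially the same route the paper takes: the paper states this proposition without a separate proof, treating it as a direct consequence of combining Proposition~\ref{DGHo0} with Lemma~\ref{DGHo1}, specializing to $A=R^{e}$, $M^{\ast}=N^{\ast}=R$, and then replacing $R$ by its Bar resolution so that $E^{\ast}$ and $F^{\ast}$ become explicit $K$-projective models of $R\otimes^{\mathbb{L}}_{R^{e}}R^{\vee^{\mathbb{L}}}$ and $\mathbb{R}\mathrm{Hom}_{R^{e}}(R,R)$; the chain-level map $E^{\ast}\to F^{\ast}$ you describe via evaluation is exactly the composition map the paper records in the next subsection after identifying $\mathrm{Bar}(R)\cong\mathbb{R}\mathrm{Hom}_{R^{e}}(R^{e},\mathrm{Bar}(R))$. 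Your attention to verifying that this evaluation map realizes Drinfeld's composition morphism in $D(k)$ is a point the paper leaves implicit, and your strategy of checking it on the $A$-summand of the bar-type tensor construction is the right way to close that gap.
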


\subsection{Hochschild cohomology of DG singularity category}

In this subsection, we give proofs of the statements in steps (1) and (2) in \S\ref{IOP}. 

Our goal is to construct a multiplication on $\mathrm{Cone}(E^{\ast} \rightarrow F^* )$. 
First, we know that
the composition of elements in the endomorphism ring
$F^{\ast} = \mathbb{R}\mathrm{Hom}_{R^e}(\mathrm{Bar}(R), \mathrm{Bar}(R))$ is a natural multiplication. 
Second, the following map 
\begin{align*}
E^{\ast} \otimes E^{\ast} \cong & \Big( \mathrm{Bar}(R) \otimes_{R^e} \mathbb{R}\mathbb{R}\mathrm{Hom}_{R^e}\big(\mathrm{Bar}(R), R^e \big) \Big) 
\otimes \Big( \mathrm{Bar}(R) \otimes_{R^e} \mathbb{R}\mathrm{Hom}_{R^e}\big(\mathrm{Bar}(R), R^e \big) \Big) \\
& \cong  \mathrm{Bar}(R) \otimes_{R^e} \Big( \mathbb{R}\mathrm{Hom}_{R^e}\big(\mathrm{Bar}(R), R^e \big)
 \otimes \mathrm{Bar}(R) \Big) \otimes_{R^e} \mathbb{R}\mathrm{Hom}_{R^e}\big(\mathrm{Bar}(R), R^e \big) \\
& \xrightarrow{invol.}   \mathrm{Bar}(R) \otimes_{R^e} 
R^e \otimes_{R^e} \mathbb{R}\mathrm{Hom}_{R^e}\big(\mathrm{Bar}(R), R^e \big) 
\\ 
& \cong \mathrm{Bar}(R) \otimes_{R^e} \mathbb{R}\mathrm{Hom}_{R^e}\big(\mathrm{Bar}(R), R^e \big) = E^{\ast}, 
\end{align*}
induced by involution, gives a  natural multiplication structure on $E^{\ast}$. 
Finally, there is a canonical 
$\Big( \mathbb{R}\mathrm{Hom}_{R^e}\big(\mathrm{Bar}(R), \mathrm{Bar}(R)\big) \Big)^e$-module 
structure
on $\mathrm{Bar}(R) \otimes_{R^e} \mathbb{R}\mathrm{Hom}_{R^e}\big(\mathrm{Bar}(R), R^e \big)$. 
Therefore, viewing
$\mathrm{Cone}(E^{\ast} \rightarrow F^* )$ as a semi-product $E^{\ast} \rtimes F^{\ast}$,
we obtain a natural
multiplication on $\mathrm{Cone}(E^{\ast} \rightarrow F^* )$. 

Next, recall that the canonical morphism $E^{\ast} \rightarrow F^{\ast}$
is given by the following composition map:
\begin{align*}
\mathrm{Bar}(R) \otimes_{R^e} \mathbb{R}\mathrm{Hom}_{R^e}\big(\mathrm{Bar}(R), R^e \big) 
& \cong \mathbb{R}\mathrm{Hom}_{R^e}\big( R^{e}, \mathrm{Bar}(R) \big) \otimes_{R^e} \mathbb{R}\mathrm{Hom}_{R^e}\big(\mathrm{Bar}(R), R^e \big) \\
& \rightarrow \mathbb{R}\mathrm{Hom}_{R^e}(\mathrm{Bar}(R), \mathrm{Bar}(R)).  
\end{align*}
This morphism together with differentials on 
$F^\ast$ and $E^\ast$ gives the natural differential on
$\mathrm{Cone}(E^{\ast} \rightarrow F^* )$. 
It is easy to check that the differential is compatible with 
the multiplication on $\mathrm{Cone}(E^{\ast} \rightarrow F^* )$. 
Thus, $\mathrm{Cone}(E^{\ast} \rightarrow F^* )$ is a differential graded algebra.

In the following, we recall the support scheme of object in $D(R)$ and
the diagonal support scheme of $S_{sg}(R)$. 

\begin{definition}
(1) Let $Q^{\ast}$ be an object in $D(R)$. The {\it support scheme} 
$\mathrm{Supp}(Q^{\ast})$ 
of $Q^{\ast}$ is a subscheme of $\mathrm{Spec}(R)$:
$$
\mathrm{Supp}(Q^{\ast}): = \big\{\mathbf{p} \in\mathrm{Spec}(R) \mid Q^{\ast}_{\mathbf{p}} 
\ncong 0 \, \textup{in} \, D(R_{\mathbf{p}}) \big\} . 
$$

(2) The {\it diagonal support scheme} 
$\mathrm{DSupp}(S_{dg}(R))$ of $S_{dg}(R)$ is a subscheme of $\mathrm{Spec}(R)$: 
$$
\mathrm{DSupp}(S_{dg}(R)): = \big\{\mathbf{p} \in\mathrm{Spec}(R) \mid \mathrm{Hom}_{S_{dg}(R^e)}(R, R)_{\mathbf{p}} 
\ncong 0 \, \textup{in} \, D(R_{\mathbf{p}}) \big\}. 
$$
\end{definition}

We have the following.

\begin{proposition}\label{Injec}
Let $R$ be a commutative Noetherian ring. Let $R/I$ be the coordinate
ring of the singular locus in $\mathrm{Spec}(R)$. Then there is an algebraic injection
$$
\iota_I: R/I \hookrightarrow \mathrm{HH}^{0}_{sg}(R)_{red},  
$$
where $\mathrm{HH}^{0}_{sg}(R)_{red}$ is the quotient ring of $\mathrm{HH}^{0}_{sg}(R)$ by its nilpotent ideal. 
\end{proposition}

\begin{proof}
First, there is a canonical DG algebraic homomorphism
$$
F^{\ast} \hookrightarrow \mathrm{Cone}(E^{\ast} \rightarrow F^* ),
$$	
which induces the following algebraic homomorphism
$$
\pi_R: R \cong \mathrm{HH}^{0}(R) \cong \mathrm{H}^{0}(F^*) \rightarrow  
\mathrm{H}^{0}\big( \mathrm{Cone}(E^{\ast} \rightarrow F^* ) \big) \cong \mathrm{HH}_{sg}^{0}(R). 
$$

Second, the composition of algebraic homomorphisms:
$$
\iota_R: R \xrightarrow{\pi_R} \mathrm{HH}_{sg}^{0}(R) 
\twoheadrightarrow \mathrm{HH}^{0}_{sg}(R)_{red}
$$
gives the canonical $R$-module structure on 
$\mathrm{HH}^{0}_{sg}(R)_{red}$.

Obviously $\iota_R$ induces an algebraic injection
$R/I \hookrightarrow \mathrm{HH}^{0}_{sg}(R)_{red}$ when $\mathrm{Ker}(\iota_R) \cong I$ as ideals of $R$. Now, we view $\mathrm{HH}^{0}_{sg}(R)_{red}$ as a sheaf on $\mathrm{Spec}(R)$. To prove the proposition,
it suffices to show that the support scheme of
$\mathrm{HH}^{0}_{sg}(R)_{red}$ is $V(I) \subseteq \mathrm{Spec}(R)$ since $V(\mathrm{Ker}(\iota_R)) \cong \mathrm{Supp}\Big( \mathrm{HH}^{0}_{sg}(R)_{red} \Big)$.  

Thus, it is enough to show that
the diagonal support scheme $\mathrm{DSupp}(S_{dg}(R))$ 
is exactly $V(I)$. 
In fact, the support scheme $\mathrm{Supp}\Big( \mathrm{HH}^{0}_{sg}(R)_{red} \Big)$ is $\mathrm{Supp}\Big( \mathrm{HH}^{0}_{sg}(R) \Big)$. 
Moreover,  
$\mathrm{Supp}\Big( \mathrm{HH}^{0}_{sg}(R) \Big)$  is also $\mathrm{DSupp}(S_{dg}(R))$ by definitions, since
the $R$-module structure on $\bigoplus_{i} \mathrm{Hom}_{D_{sg}(R^e)}(R, R[i])$ is given by the composition of $\pi_R$ and
the natural injection $\mathrm{HH}^{0}_{sg}(R) \hookrightarrow \bigoplus_{i} \mathrm{Hom}_{D_{sg}(R^e)}(R, R[i])$.

Let $\mathbf{p}$ be an element in
$\mathrm{Spec}(R)$. 
We know 
\begin{align*}
\left( \mathrm{Hom}_{S_{dg}(R^e)}(R, R) \right)_{\mathbf{p}} 
& \cong \mathrm{Cone}(E^{\ast} \rightarrow F^* )_{\mathbf{p}} \\
& \cong \mathrm{Cone}
(E^{\ast}_{\mathbf{p}} \rightarrow F^{*}_{\mathbf{p}} 
) 
\end{align*}
in $D(k)$. On the one hand, 
\begin{align*}
F^{*}_{\mathbf{p}} & =  \mathbb{R}\mathrm{Hom}_{R^e}(\mathrm{Bar}(R), R)_{\mathbf{p}} \\
& \cong \mathbb{R}\mathrm{Hom}_{R^e}(\mathrm{Bar}(R), R_{\mathbf{p}}) \\ 
& \cong \mathbb{R}\mathrm{Hom}_{R_{\mathbf{p}}^e}(\mathrm{Bar}(R) \otimes_{R^e} R_{\mathbf{p}}^e, R_{\mathbf{p}}) \\
& \cong \mathbb{R}\mathrm{Hom}_{R_{\mathbf{p}}^e}(\mathrm{Bar}(R_{\mathbf{p}}), R_{\mathbf{p}})
\end{align*}
in $D(k)$, where $\mathrm{Bar}(R)\otimes_{R^e} R_{\mathbf{p}}^e$ is 
a projective $R_{\mathbf{p}}^e$-module resolution of $R_{\mathbf{p}}$. 
On the other hand, 
\begin{align*}
E^{*}_{\mathbf{p}} & = \left(\mathrm{Bar}(R) \otimes_{R^e} \mathbb{R}\mathrm{Hom}_{R^e}\big(\mathrm{Bar}(R), R^e \big)\right)_{\mathbf{p}} \\ 
& \cong \left(R \otimes_{R^e} \overline{\mathbb{R}\mathrm{Hom}_{R^e}\big(\mathrm{Bar}(R), R^e \big)}\right)_{\mathbf{p}} \\
& \cong R_{\mathbf{p}} \otimes_{R^e} \overline{\mathbb{R}\mathrm{Hom}_{R^e}\big(\mathrm{Bar}(R), R^e \big)} \\ 
& \cong R_{\mathbf{p}} \otimes_{R_{\mathbf{p}}^e} R_{\mathbf{p}}^e \otimes_{R^e} \overline{\mathbb{R}\mathrm{Hom}_{R^e}\big(\mathrm{Bar}(R), R^e \big)} \\ 
& \cong (R_{\mathbf{p}} \otimes_{R_{\mathbf{p}}^e} R_{\mathbf{p}}^e) \otimes^{\mathbb{L}}_{R^e} \mathbb{R}\mathrm{Hom}_{R^e}\big(\mathrm{Bar}(R), R^e \big) \\ 
& \cong (R_{\mathbf{p}} \otimes^{\mathbb{L}}_{R_{\mathbf{p}}^e} R_{\mathbf{p}}^e) \otimes^{\mathbb{L}}_{R^e} \mathbb{R}\mathrm{Hom}_{R^e}\big(\mathrm{Bar}(R), R^e \big) \\ 
& \cong \mathrm{Bar}(R_{\mathbf{p}}) \otimes^{\mathbb{L}}_{R_{\mathbf{p}}^e} R_{\mathbf{p}}^e \otimes^{\mathbb{L}}_{R^e} \mathbb{R}\mathrm{Hom}_{R^e}\big(\mathrm{Bar}(R), R^e \big) \\
& \cong \mathrm{Bar}(R_{\mathbf{p}}) \otimes^{\mathbb{L}}_{R_{\mathbf{p}}^e} \Big(R_{\mathbf{p}}^e \otimes_{R^e} \mathbb{R}\mathrm{Hom}_{R^e}\big(\mathrm{Bar}(R), R^e \big) \Big)\\
& \cong \mathrm{Bar}(R_{\mathbf{p}}) \otimes^{\mathbb{L}}_{R_{\mathbf{p}}^e}   
\mathbb{R}\mathrm{Hom}_{R^e}\big(\mathrm{Bar}(R), R_{\mathbf{p}}^e \big) \\ 
& \cong \mathrm{Bar}(R_{\mathbf{p}}) \otimes^{\mathbb{L}}_{R_{\mathbf{p}}^e}   
\mathbb{R}\mathrm{Hom}_{R_{\mathbf{p}}^e}\big(\mathrm{Bar}(R) \otimes_{R^e} R_{\mathbf{p}}^e, 
R_{\mathbf{p}}^e \big) \\
& \cong \mathrm{Bar}(R_{\mathbf{p}}) \otimes^{\mathbb{L}}_{R_{\mathbf{p}}^e}   
\mathbb{R}\mathrm{Hom}_{R_{\mathbf{p}}^e}\big(\mathrm{Bar}(R_{\mathbf{p}}), R_{\mathbf{p}}^e \big)
\end{align*}
in $D(k)$, where $\overline{\mathbb{R}\mathrm{Hom}_{R^e}\big(\mathrm{Bar}(R), R^e \big)}$ is a flat  
resolution of $\mathbb{R}\mathrm{Hom}_{R^e}\big(\mathrm{Bar}(R), R^e \big)$ 
over $R^e$. By the result, we obtain 
$$
 \mathrm{Cone}(E^{\ast}_{\mathbf{p}} \rightarrow F^{*}_{\mathbf{p}} ) 
 \cong \mathrm{Hom}_{S_{dg}(R_{\mathbf{p}}^e)}(R_{\mathbf{p}}, R_{\mathbf{p}})
$$
in $D(R_{\mathbf{p}})$, if $R$ is replaced by $R_{\mathbf{p}}$.
Thus, we get 
$$
\left( \mathrm{Hom}_{S_{dg}(R^e)}(R, R) \right)_{\mathbf{p}} 
\cong \mathrm{Hom}_{S_{dg}(R_{\mathbf{p}}^e)}(R_{\mathbf{p}}, R_{\mathbf{p}})
$$
as algebras. 

Now assume $\mathbf{p}$ is a non-singular point, then $R_{\mathbf{p}}$ is a homologically smooth algebra, which implies that $\mathrm{Hom}_{S_{dg}(R_{\mathbf{p}}^e)}(R_{\mathbf{p}}, R_{\mathbf{p}})$ 
is trivial in $D(R_\mathbf{p})$. 
Hence $\left( \mathrm{Hom}_{S_{dg}(R^e)}(R, R) \right)_{\mathbf{p}}$ 
is also trivial in $D(R_\mathbf{p})$. 
Thus, ${\mathbf{p}}$ is not contained in
the diagonal support scheme $\mathrm{DSupp}(S_{dg}(R))$, 
which implies
$$
\mathrm{DSupp}(S_{dg}(R)) \subseteq V(I) := \mathrm{Spec}(R/I). 
$$

Next, let $\mathbf{q}$ be a singular point in 
$\mathrm{Spec}(R)$, i.e., $\mathbf{q} \in V(I)$. 
After similar arguments as above, we know
$\left( \mathrm{Hom}_{S_{dg}(R^e)}(R, R) \right)_{\mathbf{q}} 
\cong \mathrm{Hom}_{S_{dg}(R_{\mathbf{q}}^e)}(R_{\mathbf{q}}, R_{\mathbf{q}})$.

Assume that $\mathrm{Hom}_{S_{dg}(R_{\mathbf{q}}^e)}(R_{\mathbf{q}}, 
R_{\mathbf{q}})$ is trivial in $D(R_\mathbf{q})$. 
It implies that $\mathrm{HH}^{0}_{sg}(R_{\mathbf{q}}) \cong \mathrm{Hom}_{D_{sg}(R_{\mathbf{q}}^e)}(R_{\mathbf{q}}, 
R_{\mathbf{q}})$ is trivial algebra.
Now, consider the surjective linear map: 
$$
\mathrm{Hom}_{D_{sg}(R_{\mathbf{q}}^e)}(R_{\mathbf{q}} R_{\mathbf{q}}) \otimes_{k} \mathrm{Hom}_{D_{sg}(R_{\mathbf{q}}^e)}(F, R_{\mathbf{q}})  \twoheadrightarrow \mathrm{Hom}_{D_{sg}(R_{\mathbf{q}}^e)}(F, R_{\mathbf{q}}),
$$
which arises from the associativity of composition in the category $D_{sg}(R_{\mathbf{q}}^e)$,  
for any object $F$ in $D_{sg}(R_{\mathbf{q}}^e)$. 
Since $\mathrm{Hom}_{D_{sg}(R_{\mathbf{q}}^e)}(R_{\mathbf{q}}, 
R_{\mathbf{q}})$ is is trivial, the left-hand side vanishes. Thus, we have 
$$
\mathrm{Hom}_{D_{sg}(R_{\mathbf{q}}^e)}(F, R_{\mathbf{q}}) \cong 0
$$ 
for any object $F$ in $D_{sg}(R_{\mathbf{q}}^e)$. 
By the Yoneda lemma, $R_{\mathbf{q}} \cong 0$ in $D_{sg}(R_{\mathbf{q}}^e)$, which implies that $R_{\mathbf{q}}$ is homologically smooth. 
Thus we obtain that 
$\mathbf{q}$ is a non-singular point in 
$\mathrm{Spec}(R)$. It is a contradiction to 
the fact that 
$\mathbf{q}$ is singular in 
$\mathrm{Spec}(R)$. 
Hence the assumption that 
$$
\mathrm{Hom}_{S_{dg}(R_{\mathbf{q}}^e)}\big(\mathrm{Bar}( R_{\mathbf{q}}), 
\mathrm{Bar}( R_{\mathbf{q}})\big) 
\cong \mathrm{Hom}_{S_{dg}(R_{\mathbf{q}}^e)}( R_{\mathbf{q}}, R_{\mathbf{q}})
$$ 
is trivial in $D(R_\mathbf{q})$ does
not hold. 
Thus
$\mathrm{Hom}_{S_{dg}(R_{\mathbf{q}}^e)}(R_{\mathbf{q}},  R_{\mathbf{q}})$ 
is nontrivial in $D(R_\mathbf{q})$. 
From the isomorphism 
$$
\left( \mathrm{Hom}_{S_{dg}(R^e)}(R, R) \right)_{\mathbf{q}} 
\cong \mathrm{Hom}_{S_{dg}(R_{\mathbf{q}}^e)}(R_{\mathbf{q}}, R_{\mathbf{q}}),
$$
as algebras in the above argument, we know that $\mathbf{q}$ is contained in 
$\mathrm{DSupp}(S_{dg}(R))$, which suggests that   
$$
V(I) \subseteq \mathrm{DSupp}(S_{dg}(R)).
$$
Then we obtain $V(I) = \mathrm{DSupp}(S_{dg}(R))$,
which means that
the diagonal support scheme of 
$\mathrm{Hom}_{S_{dg}(R^e)}(R, R)$  
is $V(I)$.	
Thus, we complete the proof. 	
\end{proof}

Via the algebra homomorphism $\iota_{I}$, $\mathrm{HH}^{0}_{sg}(R)_{red}$ can be viewed as an $R/I$-module. 

\begin{lemma}\label{Fini}
With the above setting, $\mathrm{HH}^{0}_{sg}(R)_{red}$ is a finitely 
generated $R/I$-module.    
\end{lemma}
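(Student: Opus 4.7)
The plan is to realize $\mathrm{HH}^{0}_{sg}(R)$ via Buchweitz's equivalence as a stable endomorphism ring over $R^{e}$, and then observe that this $R^{e}$-action necessarily descends through the multiplication $\mu:R^{e}\to R$, so that finite generation over the Noetherian ring $R^{e}$ yields finite generation over $R$, and hence over $R/I$ by Proposition \ref{Injec}.

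Since $k$ is perfect and $R$ is a commutative Gorenstein Noetherian $k$-algebra of finite type, the enveloping algebra $R^{e}=R\otimes_{k}R$ is also Gorenstein Noetherian, and $R=R^{e}/\ker(\mu)$ is a cyclic (hence finitely generated) $R^{e}$-module. By Buchweitz's theorem $D_{sg}(R^{e})\simeq \underline{\mathrm{CM}}(R^{e})$, and the Auslander-Buchweitz Cohen-Macaulay approximation of $R$ yields a short exact sequence
\[
0\to Y\to X\to R\to 0 \qquad \textup{in } \mathrm{mod}(R^{e})
\]
with $X\in\mathrm{CM}(R^{e})$ maximal and $\mathrm{pd}_{R^{e}}Y<\infty$. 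Thus $R\simeq X$ in $D_{sg}(R^{e})$, and
\[
\mathrm{HH}^{0}_{sg}(R)\cong \underline{\mathrm{End}}_{R^{e}}(X).
\]
Since $X$ is finitely generated over the Noetherian ring $R^{e}$, the ordinary endomorphism ring $\mathrm{End}_{R^{e}}(X)$ is finitely generated over $R^{e}$ (via the center $Z(R^{e})=R^{e}$), and hence so is its stable quotient $\underline{\mathrm{End}}_{R^{e}}(X)$.

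The crux of the argument is that the $R^{e}$-action on $\underline{\mathrm{End}}_{R^{e}}(X)$ factors through $\mu:R^{e}\to R$. Indeed, on the diagonal $R^{e}$-module $R$, the endomorphisms $r\otimes 1$ and $1\otimes r$ both coincide with multiplication by $r$ (by commutativity of $R$). Transporting this equality along the isomorphism $R\simeq X$ in $D_{sg}(R^{e})$, one obtains that $r\otimes 1$ and $1\otimes r$ give the same morphism $X\to X$ in $D_{sg}(R^{e})$. Hence in $\underline{\mathrm{End}}_{R^{e}}(X)$, a general element $r\otimes s\in R^{e}$ acts as multiplication by $\mu(r\otimes s)=rs\in R$, so the $R^{e}$-module structure descends to an $R$-module structure via $\mu$. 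Since $\mu$ is surjective, finite generation over $R^{e}$ yields finite generation over $R$. The descended $R$-action coincides with the one induced by $\pi_{R}$, so by Proposition \ref{Injec} it is annihilated by $I$, and $\mathrm{HH}^{0}_{sg}(R)$ is a finitely generated $R/I$-module.

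The main obstacle is the descent step: one must show that the two endomorphisms $r\otimes 1$ and $1\otimes r$ of $X$, which typically differ in $\mathrm{End}_{R^{e}}(X)$, become equal modulo morphisms factoring through projectives. This uses the singular equivalence $R\simeq X$ in $D_{sg}(R^{e})$ essentially, together with the commutativity of $R$.
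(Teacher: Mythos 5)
Your proof is correct but takes a genuinely different route from the paper. The paper works at the level of the explicit complex $\mathrm{Cone}(E^{\ast}\to F^{\ast})$ from Proposition \ref{MainPro}: it replaces $R^{e}$ by a bounded finitely generated injective resolution $J^{\ast}$, then $\mathbb{R}\mathrm{Hom}_{R}(R,J^{\ast})$ by a finitely generated projective resolution, showing that $E^{\ast}$ is quasi-isomorphic to a complex of finitely generated $R$-modules, and then extracts finite generation of $\mathrm{HH}^{0}_{sg}(R)$ from the long exact sequence of the triangle $F^{\ast}\to\mathrm{Cone}(E^{\ast}\to F^{\ast})\to E^{\ast}[1]$. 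You instead invoke Buchweitz's equivalence $D_{sg}(R^{e})\simeq\underline{\mathrm{CM}}(R^{e})$ and Auslander--Buchweitz approximation to realize $\mathrm{HH}^{0}_{sg}(R)\cong\underline{\mathrm{End}}_{R^{e}}(X)$ for a finitely generated MCM $R^{e}$-module $X$, then note $\underline{\mathrm{End}}_{R^{e}}(X)$ is finitely generated over $R^{e}$, and argue the $R^{e}$-action descends through $\mu$ by naturality of central multiplication: for central $z\in R^{e}$, multiplication by $z$ is a natural endomorphism of the identity functor, so the identity $(r\otimes1)\cdot\mathrm{id}_{R}=(1\otimes r)\cdot\mathrm{id}_{R}$ in $\mathrm{mod}(R^{e})$ transports along any isomorphism $X\simeq R$ in $D_{sg}(R^{e})$ to give $(r\otimes1)\cdot\mathrm{id}_{X}=(1\otimes r)\cdot\mathrm{id}_{X}$ in $\underline{\mathrm{End}}_{R^{e}}(X)$. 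Both approaches use Gorensteinness of $R^{e}$ in the same essential way (you for CM approximations, the paper for a bounded f.g.\ injective resolution). Your route is shorter and more conceptual, and avoids any dependence on the paper's concrete complex machinery built up in Proposition \ref{MainPro}; the paper's route is lower-tech, staying entirely at the chain level and not invoking Buchweitz's theorem or the CM approximation theory. Your observation that commutativity of $R$ alone forces the stable $R^{e}$-action to be diagonal is a nice structural point that the paper's chain-level argument passes over silently (it works with $\pi_{R}$ directly). The final step, that the descended action coincides with the $\pi_{R}$-action and hence is killed by $I$ via Proposition \ref{Injec}, is the same in both treatments.
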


\begin{proof} 	
First, by Proposition \ref{MainPro}, we have 
$\mathrm{HH}_{sg}^{0}(R)= \mathrm{H}^{0}\big( \mathrm{Cone}(E^{\ast} \rightarrow F^* ) \big)$.

Second, $R^e$ is also a Gorenstein 
Noetherian algebra, since $R$ is a Gorenstein Noetherian algebra.
Hence, $R^e$ admits a bounded injective $R^e$-module resolution,
denoted by $J^*$.
There exist two quasi-isomorphisms 
$$
\mathbb{R}\mathrm{Hom}_{R^e}(\mathrm{Bar}(R), J^*) 
\rightarrow \mathbb{R}\mathrm{Hom}_{R^e}(\mathrm{Bar}(R), R^e)
$$ 
via the injective resolution $R^e \hookrightarrow  J^*$ 
and
$$
\mathbb{R}\mathrm{Hom}_{R^e}(R, J^*) \rightarrow \mathbb{R}\mathrm{Hom}_{R^e}(\mathrm{Bar}(R), J^*)
$$
via the Bar resolution $\mathrm{Bar}(R) \twoheadrightarrow R$. 
Due to the boundedness of $J^{\ast}$, there are only finite parts of the complex 
$\mathbb{R}\mathrm{Hom}_{R^e}(R, J^*)$ have nontrivial cohomologies. 

Meanwhile, $\forall i \in \mathbb{Z}$, $\mathrm{Hom}_{K(R^e)}(P^{*}_R, R^{e}[i]) \cong \mathrm{Hom}_{D(R^e)}(R, R^{e}[i])$  for
any finitely generated projective resolution $P^{*}_R$ of $R$ as $R^e$-modules, where
$K(R^e)$ is the homotopy category of complexes of $R^e$-modules. Hence we have that 
for any $i \in \mathbb{Z}$,  
$\mathrm{Hom}_{D(R^e)}(R, R^{e}[i])$ is a finitely generated $R^e$-module.

By \cite[Proposition 3.5]{DH}, we find that
$\mathbb{R}\mathrm{Hom}_{R^e}(\mathrm{Bar}(R), J^*)$ is quasi-isomorphic to a complex of finitely generated bounded $R^e$-modules.
Thus, 
$\mathbb{R}\mathrm{Hom}_{R^e}(\mathrm{Bar}(R), J^*)$ admits a finitely generated projective $R^e$-module resolution, denoted by $P^{\ast}_{J^*}$.  
It suggests that there are quasi-isomorphisms
\begin{align*}
& \mathrm{Bar}(R) \otimes_{R^e} P^{\ast}_{J^*} \xrightarrow{\sim} 
\mathrm{Bar}(R) \otimes_{R^e} \mathbb{R}\mathrm{Hom}_{R^e}(R, J^*) 
 \xrightarrow{\sim} \mathrm{Bar}(R) \otimes_{R^e} 
 \mathbb{R}\mathrm{Hom}_{R^e}(\mathrm{Bar}(R), J^*) \\
 & \xrightarrow{\sim} 
\mathrm{Bar}(R) \otimes_{R^e} \mathbb{R}\mathrm{Hom}_{R^e}(\mathrm{Bar}(R), R^e) 
\xrightarrow{\sim}  E^{*}.
\end{align*}
Moreover, there is a quasi-isomorphism  
$$
\mathrm{Bar}(R) \otimes_{R^e} P^{\ast}_{J^*} \xrightarrow{\sim} R \otimes_{R^e} P^{\ast}_{J^*}.  
$$ 
Thus, we have  
 $$
 R \otimes_{R^e} P^{\ast}_{J^*} \xrightarrow{\sim} E^{*}
 $$
 in $D(k)$. 
Meanwhile, we know that $P^{j}_{J^*}$ is a finitely generated projective $R^e$-module for any $j$.   
Thus, $R \otimes_{R^e} P^{j}_{J^*} $ is a finitely generated $R$-module for any $j$.    
Since $R \otimes_{R^e} P^{\ast}_{J^*} $ is a finitely generated $R$-module complex, 
the cohomologies of $E^{\ast}$ are all finitely generated $R$-modules. 

From the distinguished triangle  	
$$
F^{\ast} \rightarrow \mathrm{Cone}(E^{\ast} \rightarrow F^* ) \rightarrow E^{\ast}[1], 
$$	
we have the long exact sequence 
$$
\cdots \rightarrow \mathrm{H}^{0}(E^{\ast}) \rightarrow \mathrm{H}^{0}(F^{\ast})
 \xrightarrow{\pi_R} \mathrm{H}^{0}\big( \mathrm{Cone}(E^{\ast} \rightarrow F^* ) \big) 
 \rightarrow \mathrm{H}^{1}(E^{\ast}) \rightarrow \cdots,
$$
where $\mathrm{H}^{0}(F^{\ast}) \cong \mathrm{HH}^{0}(R) \cong R$ and $\mathrm{H}^{0}\big( \mathrm{Cone}(E^{\ast} \rightarrow F^* ) \big) 
\cong \mathrm{HH}_{sg}^{0}(R)$. 
Since both $\mathrm{H}^{0}(F^{\ast})$ and $\mathrm{H}^{1}(E^{\ast})$ 
are finitely generated $R$-modules, we find
that $\mathrm{HH}_{sg}^{0}(R)$ is also a finitely generated
$R$-module from the long exact sequence. Thus,  
 $\mathrm{HH}_{sg}^{0}(R)_{red}$ is a finitely generated
$R$-module 
 
Since $\iota_{R}$ factors through the algebraic homomorphism 
$\iota_{I}: R/I \hookrightarrow \mathrm{HH}_{sg}^{0}(R)_{red}$   
 (see Proposition \ref{Injec}), 
we obtain that $\mathrm{HH}_{sg}^{0}(R)_{red}$ is a finitely generated $R/I$-module. 
Eventually, we complete the proof. 
\end{proof}

\section{Proof of the main result}\label{TR}

In this section, we prove Theorem \ref{Th1}. 

\begin{proof}[Proof of Theorem \ref{Th1}]
Keep the settings in Proposition \ref{Injec} and Lemma \ref{Fini}. 
Since the algebraic homomorphism $\iota_I$ gives the $R/I$-module 
structure on $\mathrm{HH}_{sg}^{0}(R)_{red}$,  
the image of $\iota_I$ is contained in the center 
$Z\big(\mathrm{HH}_{sg}^{0}(R)_{red} \big)$ of $\mathrm{HH}_{sg}^{0}(R)_{red}$. 
Note that $\mathrm{HH}_{sg}^\bullet(R)$ is a Gerstenhaber algebra,
hence $\mathrm{HH}_{sg}^0(R)$ is a commutative subalgebra of
 $\mathrm{HH}_{sg}^\bullet(R)$ endowed with the cup product \cite{W0}. 
Therefore, $Z(\mathrm{HH}_{sg}^0(R)_{red})=\mathrm{HH}_{sg}^0(R)_{red}$.
Now, we have a homomorphism between commutative rings
$$
\iota_I: R/I \hookrightarrow \mathrm{HH}_{sg}^{0}(R)_{red}. 
$$
By Lemma \ref{Fini}, it is clear that
$\iota_I$ is a finite morphism between commutative rings, i.e.,
$\mathrm{HH}_{sg}^{0}(R)_{red}$ is a finitely generated $R/I$-module. 
Hence, $\mathrm{HH}_{sg}^{0}(R)_{red}$ is also a Noetherian ring. Moreover, 
by Proposition \ref{Injec}, $\iota_I$ is also a finite injection.  
Combining above arguments, 
it follows that there exists a surjective morphism between schemes:  
$$
(\iota_I)_{\natural}: \mathrm{Spec}\left(\mathrm{HH}_{sg}^{0}(R)_{red} \right) 
\twoheadrightarrow \mathrm{Spec}(R/I)
$$
given by $\iota_I$. Furthermore, $(\iota_I)_{\natural}$ is also a finite morphism 
between  schemes by the finiteness property of $\iota_I$. 
Then we complete the proof.
\end{proof}

From the above theorem, we obtain the following.

\begin{corollary}\label{MainLem}
Let $R_1$ and $R_2$	
be two commutative Gorenstein Noetherian rings. 
Suppose that there is a DG singular equivalence  	
$$
F: S_{dg}(R_1) \xrightarrow{\sim}
S_{dg}(R_2).
$$	
Then the two singular loci in the affine schemes of these two rings have the same dimension. 
\end{corollary}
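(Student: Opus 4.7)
The plan is to use the fact that Hochschild cohomology is a quasi-equivalence invariant of DG categories, and then transfer dimension information along the finite surjective morphism constructed in Theorem \ref{Th1}. First, a DG singular equivalence $F: S_{dg}(R_1) \xrightarrow{\sim} S_{dg}(R_2)$ induces, via Theorem \ref{Th2}, a canonical isomorphism of graded algebras
\[
\mathrm{HH}^{\ast}_{sg}(R_1) \cong \mathrm{HH}^{\ast}_{sg}(R_2),
\]
because Hochschild cohomology of a small DG category descends to the localization $\mathrm{Hqe}$. In particular, taking degree zero and then taking centers yields an isomorphism of commutative rings
\[
Z\bigl(\mathrm{HH}^{0}_{sg}(R_1)\bigr) \cong Z\bigl(\mathrm{HH}^{0}_{sg}(R_2)\bigr),
\]
and hence an isomorphism of affine schemes between the two schemes $X_1$ and $X_2$ constructed in the proof of Theorem \ref{Th1} for $R_1$ and $R_2$ respectively.

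Next, I apply Theorem \ref{Th1} to each $R_j$ to obtain finite surjective morphisms
\[
(\tilde{\iota}_{I_j})_{\natural}: X_j = \mathrm{Spec}\bigl(Z(\mathrm{HH}^{0}_{sg}(R_j))\bigr) \twoheadrightarrow \mathrm{Spec}(R_j/I_j),
\]
where $\mathrm{Spec}(R_j/I_j)$ is the singular locus of $\mathrm{Spec}(R_j)$. A finite morphism of Noetherian schemes preserves Krull dimension, and surjectivity ensures that the dimension of the target equals the dimension of the source. Therefore
\[
\dim \mathrm{Spec}(R_j/I_j) \;=\; \dim X_j \;=\; \dim Z\bigl(\mathrm{HH}^{0}_{sg}(R_j)\bigr),
\]
for $j = 1, 2$.

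Combining these two observations, the isomorphism $Z(\mathrm{HH}^{0}_{sg}(R_1)) \cong Z(\mathrm{HH}^{0}_{sg}(R_2))$ forces $\dim X_1 = \dim X_2$, and consequently
\[
\dim \mathrm{Spec}(R_1/I_1) \;=\; \dim \mathrm{Spec}(R_2/I_2),
\]
which is the desired equality of dimensions of the two singular loci. The main conceptual step is simply the invariance of Hochschild cohomology under DG quasi-equivalence together with Keller's identification in Theorem \ref{Th2}; I do not foresee a serious obstacle, as the dimension-preserving property of finite surjective morphisms is standard commutative algebra (going-up and integrality of the extension $(\iota_{I_j})_{\natural}$ guaranteed by Lemma \ref{Fini}).
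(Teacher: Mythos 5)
Your proof is correct and follows essentially the same route as the paper: apply Theorem \ref{Th2} and the quasi-equivalence invariance of Hochschild cohomology to identify $Z\bigl(\mathrm{HH}^{0}_{sg}(R_1)\bigr) \cong Z\bigl(\mathrm{HH}^{0}_{sg}(R_2)\bigr)$, then use the finite surjective morphism from Theorem \ref{Th1} to transfer dimension equality to the singular loci. The only cosmetic difference is the order of the two steps.
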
 

\begin{proof}
Again keep the settings in Proposition \ref{Injec} and Lemma \ref{Fini}. 
Since $(\iota_I)_{\natural}$ is a finite morphism between schemes, 
$(\iota_I)_{\natural}$ has finite fibers, i.e., any one of its fibers has finite points. 
It implies that  
$$
\mathrm{dim}\big( \mathrm{Spec}(R/I) \big) = \mathrm{dim}
\left( \mathrm{Spec}\left( \mathrm{HH}_{sg}^{0}(R)_{red} \right) \right) 
$$
since $(\iota_I)_{\natural}$ is surjective. 
Thus the dimension of singular 
locous of $\mathrm{Spec}(R)$ is equal to the Krull dimension of $ \mathrm{HH}_{sg}^{0}(R)_{red} $.  

In the meantime, there is a quasi-equivalence  	
$$
F: S_{dg}(R_1) \xrightarrow{\sim} S_{dg}(R_2). 
$$  
Thus by Theorem \ref{Th2}, their Hochschild cohomologies are isomorphic. 
It implies that 
$$
\mathrm{HH}_{sg}^{0}(R_1)_{red} \cong \mathrm{HH}_{sg}^{0}(R_2)_{red} 
$$
as algebras. Therefore, the two singular loci in these two schemes
of $R_1$ and $R_2$ respectively,  have the same dimension. 
\end{proof}

By Proposition \ref{Morita}, 
any singularity equivalence of Morita type gives a DG singularity equivalence,
and thus we get the following. 
\begin{corollary}\label{MainLem1}
Let $R_1$ and $R_2$	
be two commutative Gorenstein Noetherian rings. 
Suppose that there is a singular equivalence of Morita type
$$
\Phi: D_{sg}(R_1) \xrightarrow{(-)\otimes^{\mathbb{L}}_{R_1} M} 
D_{sg}(R_2),
$$	
for some $R_{2} \otimes R^{op}_1$-module $M$. Then the two singular loci in the affine schemes 
of these two rings have the same dimension. 	
\end{corollary}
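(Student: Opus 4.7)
The plan is to deduce the statement directly by composing the liftability result of Proposition \ref{Morita} with the dimension equality of Corollary \ref{MainLem}; no new ingredient is required, so the task is essentially to verify that the hypotheses chain together cleanly.

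First, I would unpack the hypothesis. A singular equivalence of Morita type (in the sense of \cite{CS0}) is by definition induced by an $R_2 \otimes R_1^{op}$-module $M$ that is projective both as an $R_2$-module and as an $R_1^{op}$-module, together with the assertion that the derived tensor functor $(-)\otimes^{\mathbb{L}}_{R_1} M : D_{sg}(R_1) \to D_{sg}(R_2)$ is a triangle equivalence. This bi-projectivity of $M$ is exactly the hypothesis of Proposition \ref{Morita}, and the triangle equivalence $\Phi$ realizes statement (1) of that proposition.

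Second, I would invoke Proposition \ref{Morita} to upgrade $\Phi$ to a quasi-equivalence of the canonical DG enhancements
\[
(-)\otimes^{\mathbb{L}}_{R_1} M : S_{dg}(R_1) \xrightarrow{\sim} S_{dg}(R_2).
\]
This places us exactly in the setting of Corollary \ref{MainLem}, whose conclusion is precisely that the singular loci of $\mathrm{Spec}(R_1)$ and $\mathrm{Spec}(R_2)$ have the same Krull dimension, and we read off the desired statement.

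The argument has no genuine obstacle: the substantive work, namely that the singular locus is detected by $\mathrm{HH}^{0}_{sg}(R)$ via an injective finite extension (Proposition \ref{Injec} and Lemma \ref{Fini}) and that this yields a finite surjection of affine schemes preserving dimension (Theorem \ref{Th1} and Corollary \ref{MainLem}), is already done. The only nontrivial point to acknowledge explicitly is that singular equivalence of Morita type is stronger than an abstract triangle equivalence of $D_{sg}$, precisely because the bimodule $M$ comes with the projectivity required by Proposition \ref{Morita}; it is exactly this extra structure that allows the equivalence to be lifted to the DG level, which is where Corollary \ref{MainLem} can be applied.
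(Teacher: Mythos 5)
Your proposal is correct and matches the paper's own (one-line) argument exactly: invoke Proposition \ref{Morita} to lift the singular equivalence of Morita type to a quasi-equivalence $S_{dg}(R_1)\xrightarrow{\sim}S_{dg}(R_2)$, then apply Corollary \ref{MainLem}. You also correctly identify the bi-projectivity of $M$ as the hypothesis making the lift possible, which is precisely the role Proposition \ref{Morita} plays here.
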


\begin{ack}
We would like to thank Xiaojun Chen and Huijun Fan 
for their encouragement, support and suggestions. 
Moreover, we thank Martin Kalck for 
pointing out an error
in the previous version of this paper. This paper is supported by the National Natural Science Foundation of China (Grant No. 12401050).
\end{ack}

\end{document}